\numberwithin{equation}{section}
\newtheorem{thm}[equation]{Theorem}
\newtheorem*{thm*}{Theorem}
\newtheorem{prop}[equation]{Proposition}
\newtheorem{lemma}[equation]{Lemma}
\newtheorem{ex}[equation]{Example}
\newtheorem{coro}[equation]{Corollary}
\newtheorem*{coro*}{Corollary}
\theoremstyle{remark}
\newtheorem*{remark*}{Remark}
\newtheorem{remark}[equation]{Remark}
\theoremstyle{definition}
\numberwithin{figure}{section}
\numberwithin{table}{section}
\begin{document}

\begin{abstract}
For primes $p,e>2$ there are at least $p^{e-3}/e$ groups of order $p^{2e+2}$ 
that have equal multisets of isomorphism types of proper subgroups and proper quotient
groups, isomorphic character tables, and power maps.  This  obstructs recent
speculation concerning a path towards efficient isomorphism tests for general finite groups.
These groups have a special purpose polylogarithmic-time isomorphism test.  
\end{abstract}

\title{The threshold for subgroup profiles to agree is $\Omega(\log n)$.}
\author{James B. Wilson}
\thanks{This research was supported in part by  NSF grant DMS 1620454.}
\address{
	Department of Mathematics\\
	Colorado State University\\
	Fort Collins, CO 80523\\
}
\email{James.Wilson@ColoState.Edu}
\date{\today}
\keywords{group isomorphism, profiles, $p$-groups}

\maketitle

\section{Introduction}
A recent breakthrough result by Babai has pushed the complexity of 
isomorphism testing of finite graphs on $n$ vertices to an upper bound of
$n^{O((\log n)^c))}$ for some $c\geq 1$ \cite{Babai:graph}. This brings the 
complexity of graph isomorphism within range of the present complexity 
for isomorphism testing of groups of order $n$.  That complexity is bounded by
$n^{O(\mu(n))}$ where given the prime factorization $n=p_1^{e_1}\cdots p_s^{e_s}$,
\begin{align*}
	\mu(p_1^{e_1}\cdots p_s^{e_s}) & =\max\{e_1,\dots, e_s\}.
\end{align*}
Pultr and Hedrl\'in \cite{PH} constructed reductions that imply that group isomorphism reduces to graph isomorphism in time 
polynomial in $n$; see also   \cite{Miller}.  When $\mu(n)$ is bounded then group isomorphism is in polynomial
time in $n$.  For each $c>1$, as $n\to\infty$, the number of integers $n$ for which 
$\mu(n)\leq c$ tends to $1/\zeta(c)$, e.g. 60\% of integers are square-free and 99\% have $\mu(n)\leq 8$.  
Yet for $n=p^{\ell}$ the group isomorphism problem  has the complexity of $n^{O(\log_p n)}$
which makes it an obstacle to the improvement of graph isomorphism. 

In that vein recent speculation by Gowers \cite{Gowers:blog} and Babai \cite{Babai:graph}*{p. 81}
has revisited the idea of using
a portion of the subgroups of a finite group to determine isomorphism types
of finite groups.  Algorithms for testing isomorphism have been successfully 
using such ideas as heuristics for some time; cf. fingerprinting in \cite{ELGOB}.  
Yet, proving efficiency based on these heuristics has been obstructed by knowledge of
examples of Rottl\"ander, and  others, which show that lattices are not enough to 
characterize isomorphism  \cite{Rottlaender}.

Circumventing existing counter-examples, Gowers introduced a threshold criterion.   
In \cite{Gowers:blog} he asked if as the 
number $d(G)=\min\{d : G=\langle x_1,\dots,x_d\rangle\}$ grows toward the 
upper bound of $\log_2 |G|$ (actually $\mu(|G|)+1$ \cite{Guralnick}), 
is the isomorphism type of $G$ determined by the subgroups that 
are $k$-generated, for a $k$ much smaller than $d(|G|)$?  If true it would 
improve isomorphism testing to $O(|G|^{2k})$ steps.  
Glauberman-Grabowski \cite{GG} gave examples $G$ where
$k\geq \sqrt{2\log_3 |G|} - 5/2$.  We will give examples of groups $G$ of
odd order $|G|=p^{\ell}$ for which we need $k=\ell-2$. So in general 
we need $k\geq\log_3 |G|-2$.

Fix primes $p,e>2$. The Heisenberg group over a field $\mathbb{F}_{p^e}$ of
order $p^e$ is:
\begin{align*}
	H & = H(\mathbb{F}_{p^e}) = \left\{\begin{bmatrix} 1 & \alpha & \gamma \\ & 1 & \beta\\ & & 1\end{bmatrix}
		:\alpha,\beta,\gamma\in \mathbb{F}_{p^e}\right\}.
\end{align*}
By the {\em subgroup profile} of a group $G$ we mean the partition of the proper
subgroups of $G$ into isomorphism classes.   Likewise define the 
{\em quotient-group profile}.  We prove:

\begin{thm}\label{thm:main}
The groups $N\leq H'$ of index $p^{2e+2}$ in $H$ are normal in $H$ and 
each $H/N$ has the same subgroup and quotient-group profile and has
$d(H/N)=2e$.  Yet,
\begin{align*}
	\mathcal{G}_{p,e} & = \{H/N : N\leq H',  |H:N|=p^{2e+2}\}
\end{align*} 
has at least $p^{e-3}/e$ isomorphism classes.
\end{thm}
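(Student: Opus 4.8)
The plan is to route through the Baer correspondence, convert isomorphism of the quotients $H/N$ into equivalence of pencils of alternating forms, and then read off a single $\mathbb{F}_p$-rational invariant whose range is already large enough. First I would record that, since $p>2$ and $H$ has exponent $p$ and class $2$, the group $H$ is encoded by its commutator bimap, the $\mathbb{F}_{p^e}$-bilinear alternating form $\beta\colon\mathbb{F}_{p^e}^2\times\mathbb{F}_{p^e}^2\to\mathbb{F}_{p^e}$ with $\beta((x_1,x_2),(y_1,y_2))=x_1y_2-x_2y_1$, where $H'=Z(H)$ is identified with $\mathbb{F}_{p^e}$. A subgroup $N\le H'$ with $|H:N|=p^{2e+2}$ is exactly a codimension-$2$ $\mathbb{F}_p$-subspace of $\mathbb{F}_{p^e}$; it is central, hence normal, and $G:=H/N$ again has exponent $p$ and class $2$, with $G'=Z(G)=H'/N\cong\mathbb{F}_p^2$ and $G/G'\cong\mathbb{F}_p^{2e}$. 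By the Baer correspondence $G$ is determined up to isomorphism by the bimap $\beta_N=\pi_N\circ\beta\colon\mathbb{F}_p^{2e}\wedge\mathbb{F}_p^{2e}\to\mathbb{F}_{p^e}/N$, with $\pi_N$ the projection, and $H/N_1\cong H/N_2$ if and only if $\beta_{N_1}$ and $\beta_{N_2}$ are equivalent under $\mathrm{GL}_{2e}(\mathbb{F}_p)\times\mathrm{GL}_2(\mathbb{F}_p)$. (The remaining assertions of the theorem---equal profiles and $d(H/N)=2e$---I would treat separately; they are not needed for the count.)

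Next I would dualize and extract the invariant. With $\mathrm{Tr}\colon\mathbb{F}_{p^e}\to\mathbb{F}_p$ the field trace, the dual of $\mathbb{F}_{p^e}/N$ is $W:=N^{\perp}$, a $2$-dimensional subspace of $\mathbb{F}_{p^e}$, and $\beta_N$ is equivalent data to the pencil of nondegenerate alternating $\mathbb{F}_p$-forms $b_c(u,v)=\mathrm{Tr}\big(c\,\beta(u,v)\big)$, $c\in W$. The one computation that drives everything is that $\widehat{b_c}=\widehat{b_1}\circ m_c$ as maps $V\to V^{*}$ for every $c\in\mathbb{F}_{p^e}$, where $V=\mathbb{F}_{p^e}^2$, $\widehat{\,\cdot\,}$ is the flat map, and $m_c$ is multiplication by $c$; hence for any $\mathbb{F}_p$-basis $\{c_1,c_2\}$ of $W$ we get $\widehat{b_{c_1}}^{-1}\widehat{b_{c_2}}=m_{a_W}$ with $a_W:=c_1^{-1}c_2\in\mathbb{F}_{p^e}\smallsetminus\mathbb{F}_p$. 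Replacing the basis of $W$ replaces $a_W$ by an $\mathbb{F}_p$-M\"obius transform, and the $\mathrm{GL}_{2e}(\mathbb{F}_p)$-factor of an equivalence conjugates $m_{a_W}$ inside $\mathrm{GL}_{2e}(\mathbb{F}_p)$; as $m_x,m_y$ are conjugate there exactly when $x,y$ share a minimal polynomial over $\mathbb{F}_p$, i.e.\ are Galois conjugate, an isomorphism $H/N_1\cong H/N_2$ forces $a_{W_1},a_{W_2}$ into one orbit of $\Gamma:=\langle\mathrm{PGL}_2(\mathbb{F}_p),\mathrm{Gal}(\mathbb{F}_{p^e}/\mathbb{F}_p)\rangle$ acting on $\mathbb{P}^1(\mathbb{F}_{p^e})$.

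Finally the count: every $t\in\mathbb{F}_{p^e}\smallsetminus\mathbb{F}_p$ occurs as $a_W$ for $W=\mathbb{F}_p+\mathbb{F}_pt$, so $H/N\mapsto(\Gamma\text{-orbit of }a_{W})$ hits every $\Gamma$-orbit meeting $\mathbb{F}_{p^e}\smallsetminus\mathbb{F}_p$, and distinct orbits give non-isomorphic groups. Since Frobenius fixes each $\mathbb{F}_p$-M\"obius map, $\Gamma=\mathrm{PGL}_2(\mathbb{F}_p)\,\mathrm{Gal}(\mathbb{F}_{p^e}/\mathbb{F}_p)$ has order at most $(p^3-p)e$, so the number of such orbits, hence $|\mathcal{G}_{p,e}|$, is at least
\begin{align*}
\frac{p^e-p}{(p^3-p)e}=\frac{p^{e-1}-1}{(p^2-1)e}\ \geq\ \frac{p^{e-3}}{e},
\end{align*}
the last inequality holding because $p^{e-3}\geq1$ for $e\geq3$. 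I expect the main obstacle to be the middle reduction---making sure the Baer correspondence really forces every isomorphism $H/N_1\to H/N_2$ down to an equivalence of bimaps (no hidden isomorphisms), and that the ``slope'' $a_W$ is genuinely intrinsic up to $\Gamma$ rather than a basis artifact; once that is secured, the conjugacy criterion for $m_x$ and the orbit count are routine.
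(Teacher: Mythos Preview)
Your lower-bound argument for $|\mathcal{G}_{p,e}|$ is correct and takes a more direct route than the paper. The paper first proves that every isomorphism $H/N_1\to H/N_2$ lifts to ${\rm Aut}(H)$ (Theorem~\ref{thm:adj-tensor}, via the adjoint ring ${\rm Adj}([,]_+^{H/N})\cong\mathbb{M}_2(\mathbb{F}_q)$ and Skolem--Noether), and then counts ${\rm Aut}(H)$-orbits on codimension-$2$ subspaces of $H'$ (Corollary~\ref{coro:count}). Your slope $a_W=c_1^{-1}c_2$ is essentially the dual incarnation of the paper's $L_1^{-1}L_2$ (Proposition~\ref{prop:pairs}); by checking directly that it is well-defined up to $\Gamma=\mathrm{PGL}_2(\mathbb{F}_p)\cdot\mathrm{Gal}$ you bypass the lifting theorem altogether. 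The worry you flag about the ``middle reduction'' is not the problem: only the easy direction of the Baer correspondence (isomorphism $\Rightarrow$ bimap equivalence) is used, and your conjugacy computation for $m_{a_W}$ is sound. What you lose relative to the paper is the exact identification of isomorphism classes with ${\rm Aut}(H)$-orbits; what you gain is a short self-contained route to the bound.

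The genuine gap is elsewhere: you defer the equal subgroup and quotient profiles, but these are the substance of the theorem, not side remarks. The quotient profile is short once one knows that all genus-$1$ quotients of $H$ are isomorphic (Corollary~\ref{coro:quo-profile}). The subgroup profile is harder. The paper's mechanism is a counting formula (Theorem~\ref{thm:count}): if ${\rm Aut}(G)$ is transitive on maximal subgroups and some maximal $M$ has $d(G)=1+d(M)$, then the entire subgroup profile of $G$ is determined by the isomorphism type of $M$ alone. One then verifies both hypotheses for every $G\in\mathcal{G}_{p,e}$ (Lemmas~\ref{lem:max-conj} and~\ref{lem:flat}) and exhibits a single maximal-subgroup isomorphism type shared by all of them, independent of $N$. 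None of this is visible from your pencil-of-forms setup, and you suggest no alternative, so as written the proposal does not prove the stated theorem.
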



\subsection{Further invariants}
More can be said about the similarities in the groups of Theorem~\ref{thm:main}.
Brauer had asked if non-isomorphic groups could have isomorphic character tables
together with exponent structure.
Dade offered the first counter-examples \cite{Dade}.  
The groups in $\mathcal{G}_{p,e}$ also have isomorphic character tables and together with
$p$-th power maps \cite{LW}.  Indeed, these examples have the largest possible 
character tables with that property,  specifically of size $\frac{n}{p^2}\times \frac{n}{p^2}$ (the largest
any character table can be is $n\times n$).
All noncentral conjugacy classes in $H/N$ have the same size.
The groups are both directly and centrally indecomposable and with the same algebraic 
type of indecomposability (an invariant introduced in \citelist{\cite{Wilson:unique-cent}*{Theorem~4.41}\cite{Wilson:RemakI}*{Theorem~8}} 
that links indecomposability to isomorphism types of
local commutative rings and local Jordan algebras). 

Barnes-Wall \cite{Barnes-Wall} show that the lattice of a nilpotent
group of class $2$ and exponent $p$ determines the isomorphism
type of the group
(which corrects an errant remark of the author).
The groups in $\mathcal{G}_{p,e}$ have maximum sized lattices 
with $|G|^{\Theta(\log |G|)}$ subgroups, chains of length 
$\log_p |G|$ and antichains of 
length $|G|^{\Theta(\log_p |G|)}$.
We have no tools to compare such large lattices.

Despite similarities, isomorphism in $\mathcal{G}_{p,e}$ is easy to test.

\begin{thm}[\citelist{\cite{LW}\cite{BMW}}]\label{thm:algo}
\begin{enumerate}[(a)]
\item There is a deterministic algorithm that, given a black-box group $G$,
determines if $G\cong H/N$ for some $N<H$ and if so returns a surjection
$H\to G$.  The timing is polynomial in $e+p$.

\item There is a deterministic algorithm that given groups $G_1,G_2\in
\mathcal{G}_{p,e}$, decides if $G_1\cong G_2$.  The timing is polynomial
in $e+p$.

\end{enumerate}
\end{thm}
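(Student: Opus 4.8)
The plan is to reduce everything, after a black‑box preprocessing step, to linear algebra over $\mathbb{F}_p$, leaning on the structure theory behind Theorem~\ref{thm:main}. Write $q=p^e$. Recall from the analysis underlying that theorem that a finite group $G$ is isomorphic to some $H/N$ exactly when $G$ has class $2$, exponent $p$, $Z(G)=G'=\Phi(G)$ of order $p^2$ and $d(G)=2e$, and its commutator bimap $b_G\colon V\times V\to W$ — with $V=G/Z(G)\cong\mathbb{F}_p^{2e}$ and $W=Z(G)\cong\mathbb{F}_p^2$ — is pseudo-isometric to $q_N\circ b$ for a codimension-$2$ subspace $N\le\mathbb{F}_q$, where $b$ is the $\mathbb{F}_q$-symplectic form on $\mathbb{F}_q^2$; and that within this family $G_1\cong G_2$ iff the two subspaces lie in one orbit of $\Gamma L_1(q)=\mathbb{F}_q^{\times}\rtimes\operatorname{Gal}(\mathbb{F}_q/\mathbb{F}_p)$ (scalar multiplication together with Frobenius).

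For part (a) I would, given the black-box $G$ with $p$ and $e$ known: first check $|G|=p^{2e+2}$, then use the standard constructive machinery for black-box $p$-groups (the prime being given) to produce a power-commutator presentation on a minimal generating set, at cost polynomial in $p$ and the class-length $2e+2$. From it, verify the necessary conditions (class $2$, exponent $p$, $Z(G)=G'=\Phi(G)$ of order $p^2$, $d(G)=2e$), reject on failure, and read off $b_G$. Next, test the rigidity that characterizes the family: $f\circ b_G$ is a non-degenerate alternating form on $V$ for each of the $p+1$ functional-lines $\langle f\rangle\le W^{*}$. If so, recover the concealed field: form the adjoint $\ast$-algebra $\operatorname{Adj}(b_G)=\{X\in\operatorname{End}_{\mathbb{F}_p}(V):\exists\,X^{*},\ b_G(Xu,v)=b_G(u,X^{*}v)\ \forall u,v\}$ by solving a linear system, compute its radical and a Wedderburn splitting (polynomial-time algebra over $\mathbb{F}_p$), and locate the copy of $\mathbb{F}_q$ that Theorem~\ref{thm:main} guarantees. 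This realizes $V$ as a $2$-dimensional $\mathbb{F}_q$-space in which $b_G$ becomes $q_N\circ b$ for an explicit $N$; lifting an $\mathbb{F}_q$-basis of $V$ to elements of $G$ and checking the relations of $H$ then produces the surjection $H\to G$. After the preprocessing, every step is linear algebra in $\mathbb{F}_p$-spaces of dimension $O(e)$ or $O(e^2)$ together with arithmetic in $\mathbb{F}_q$, so the whole thing runs in time polynomial in $e+p$.

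For part (b) I would run (a) on $G_1$ and $G_2$ to obtain codimension-$2$ subspaces $N_1,N_2\le\mathbb{F}_q$, each determined up to its $\Gamma L_1(q)$-orbit, and then test whether these orbits coincide: for each of the $e$ powers $\sigma$ of Frobenius, decide whether $\sigma(N_2)$ equals $\lambda N_1$ for some $\lambda\in\mathbb{F}_q^{\times}$, which — picking a basis $v_1,v_2$ of $\sigma(N_2)$ — amounts to testing $v_1^{-1}N_1\cap v_2^{-1}N_1\ne 0$, a single linear feasibility problem in $\mathbb{F}_q$. This is again polynomial in $e+p$.

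The hard part is the recovery step of (a): turning the rigidity behind Theorem~\ref{thm:main} — that the adjoint algebra of $b_G$ canonically recovers $\mathbb{F}_q$ — into an explicit computation that both extracts the field and certifies rejection of every group outside the family. The black-box $p$-group preprocessing (standard) and the orbit test of (b) are routine by comparison; the only residual care is bookkeeping that ``polynomial in $e+p$'' survives the preprocessing and the $O(e)$- and $O(e^2)$-dimensional linear algebra.
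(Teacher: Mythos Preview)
The paper does not prove Theorem~\ref{thm:algo}; it is stated with attribution to \cite{LW} and \cite{BMW}, and the text immediately following the statement says ``We leave discussion of computational models for groups to the references just cited.'' So there is no in-paper proof to compare your proposal against.

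That said, your sketch is very much in the spirit of the machinery the paper does develop (Sections~3--4): the adjoint ring of the commutator bimap, the fact that it recovers $\mathbb{M}_2(\mathbb{F}_q)$ for quotients of genus $>1$ (Lemma~\ref{lem:adj}), and the description of $\operatorname{Aut}(H)$ acting on $H'$ via $\mathbb{F}_q^{\times}\rtimes\operatorname{Gal}(\mathbb{F}_q/\mathbb{F}_p)$ (equation~\eqref{eq:Aut} and Corollary~\ref{coro:count}). This is also, at a high level, what the cited references do.

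Two small repairs. First, in part~(a) the statement asks about \emph{all} proper quotients $H/N$, not only those in $\mathcal{G}_{p,e}$; your necessary conditions pin down only the genus-$2$ case. You would want to allow any $g$ with $1\le g\le e$ for $|Z(G)|=p^{g}$, handle $g=1$ and abelian quotients trivially, and for $g\ge 2$ run the adjoint computation exactly as you describe (Lemma~\ref{lem:adj} covers every $g>1$). Second, in part~(b) you write ``picking a basis $v_1,v_2$ of $\sigma(N_2)$,'' but $N_2$ has $\mathbb{F}_p$-dimension $e-2$, not $2$; either take an $(e-2)$-element basis and intersect all $v_i^{-1}N_1$, or pass to the $2$-dimensional annihilators $N_i^{\perp}\le\mathbb{F}_q$ under the trace pairing and run the test there. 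Neither issue affects the overall strategy or the claimed complexity.
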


We leave discussion of computational models for groups to the references just cited, which we
note involve algorithms that apply to a broader class of problems.  Narrowed to our specific
setting where $p$ and $e$ are prime, the precise complexity is $O(p+e^{\omega}\log_2^2 p)$ where $2\leq \omega<3$ is the exponent of feasible matrix multiplication.  The leading $p$ can be replaced by $\log^2 p$ at the cost
of a Las Vegas polynomial-time algorithm.
It takes $\Omega((e\log_2 p)^2)$ bits to input the groups we consider
by any of the standard methods including matrices, presentations, or permutations.
\medskip

Some of the steps in the proof of Theorem~\ref{thm:main} can be extracted as special cases of results
in \citelist{\cite{LW}\cite{BW:autotopism}\cite{BMW}}.  However, there is an increased need to provide an easier introduction
into the methods represented in those works.  We opt to make this exposition largely self-contained and
we rely in as much as possible on proofs based in linear algebra.

\subsection*{Preliminaries}
We assume all groups in this note are finite.
The {\em Frattini subgroup} $\Phi(G)$ is the intersection of the maximal subgroups of $G$.
The {\em exponent} of a group $G$ is the least positive integer $m$ such that for every $g\in G$,
$g^m=1$.  The commutator subgroup $H'$ is the smallest normal subgroup whose quotient is abelian,
equivalently the subgroup generated by commutators $[x,y]=x^{-1}x^y=x^{-1}y^{-1}xy$,
and $G^p$ is the subgroup generated by $p$-th powers. The {\em genus}
of $G$ is $d(\Phi(G))$.  We require the following:

\begin{thm}[Burnside Basis Theorem]
For a  $p$-group $G$, $\Phi(G)=G' G^p$ and $G/\Phi(G)\cong \mathbb{Z}_p^{d(G)}$.
\end{thm}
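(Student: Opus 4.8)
The plan is to prove the two assertions in turn: first the identity $\Phi(G)=G'G^p$, and then the isomorphism $G/\Phi(G)\cong\mathbb{Z}_p^{d(G)}$, with the latter resting on the general non-generator characterization of the Frattini subgroup. The recurring tool is that, for a $p$-group, each maximal subgroup is normal of index $p$, which converts lattice-theoretic statements into linear algebra over $\mathbb{F}_p$.

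For the containment $G'G^p\leq\Phi(G)$, I would first record that every maximal subgroup $M$ of a $p$-group $G$ is normal with $G/M\cong\mathbb{Z}_p$. A finite $p$-group is nilpotent and so satisfies the normalizer condition: a proper subgroup is properly contained in its normalizer. Applied to a maximal $M$ this forces $N_G(M)=G$, so $M\trianglelefteq G$; then $G/M$ is a simple $p$-group, hence cyclic of order $p$. Since each such quotient is abelian of exponent $p$, we get $G'\leq M$ and $G^p\leq M$, so $G'G^p\leq M$. Intersecting over all maximal $M$ yields $G'G^p\leq\Phi(G)$.

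For the reverse containment I would pass to $\bar G=G/G'G^p$. By construction $\bar G$ is abelian (we have killed $G'$) of exponent dividing $p$ (we have killed $G^p$), hence elementary abelian, i.e. an $\mathbb{F}_p$-vector space. In a vector space the maximal subgroups are exactly the hyperplanes through the origin, and the intersection of all of them is trivial, so $\Phi(\bar G)=1$. Pulling back along $G\to\bar G$ shows $\Phi(G)\leq G'G^p$. Combined with the previous paragraph this gives $\Phi(G)=G'G^p$ and exhibits $G/\Phi(G)$ as elementary abelian, so $G/\Phi(G)\cong\mathbb{Z}_p^{r}$ for some $r$.

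It remains to identify $r$ with $d(G)$, and here the crux is the non-generator property: if $G=\langle\Phi(G),S\rangle$ then already $G=\langle S\rangle$. Indeed, were $\langle S\rangle$ proper it would lie in some maximal subgroup $M$; but $\Phi(G)\leq M$ as well, forcing $\langle\Phi(G),S\rangle\leq M<G$, a contradiction. Granting this, I would lift any $\mathbb{F}_p$-basis of $G/\Phi(G)$ to elements $g_1,\dots,g_r$, so that $G=\langle g_1,\dots,g_r,\Phi(G)\rangle=\langle g_1,\dots,g_r\rangle$ and hence $d(G)\leq r$; conversely any generating set of $G$ maps onto a spanning set of the $r$-dimensional space $G/\Phi(G)$, giving $d(G)\geq r$. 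Thus $r=d(G)$. I expect the only genuinely delicate point to be the opening step—that maximal subgroups of a $p$-group are normal of index $p$—since this is exactly where the prime-power order (through nilpotency) is used; everything afterward is elementary linear algebra over $\mathbb{F}_p$ together with the purely lattice-theoretic non-generator argument.
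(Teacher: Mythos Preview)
Your proof is correct and follows the standard textbook route to the Burnside Basis Theorem. The paper, however, does not prove this statement at all: it is listed in the Preliminaries as a classical fact to be invoked later (notably in Lemma~\ref{lem:Frattini}), with no proof supplied. So there is nothing in the paper to compare your argument against; you have simply filled in a result the author treats as background.

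One small remark on presentation: when you ``pull back along $G\to\bar G$'' to get $\Phi(G)\leq G'G^p$, you are implicitly using that every maximal subgroup of $G$ contains $G'G^p$ (which you already established), so that maximal subgroups of $G$ and of $\bar G$ are in bijection via the correspondence theorem, and hence $\Phi(G)/G'G^p=\Phi(\bar G)=1$. This is entirely valid, but it may be worth making the bijection explicit, since in general one only has $\Phi(G)N/N\leq\Phi(G/N)$ rather than equality.
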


\section{A formula for subgroup profiles}
We prove a formula that, under some hypotheses, 
calculates the subgroup profiles in $p$-groups. This allows us to construct groups 
that produce the same profile without need to directly compare the groups.

\begin{thm}\label{thm:count}
Let $G$ be a $p$-group in which ${\rm Aut}(G)$ acts transitively on maximal subgroups and
such that for a maximal subgroup $M$ of $G$, $d(G)=1+d(M)$.  Then for every $J<G$,
the size of $\mathcal{J}(J)=\{ K< G : K\cong J\}$ is
\begin{align*}
	\sum_{f=0}^{d(M)} \frac{p^{1+d(M)}-1}{p^f-1} \left|\left\{ K\leq M : 
		\begin{array}{c} K\cong J,\\~ |M:K\Phi(M)|=p^f\end{array}\right\}\right|.
\end{align*}
In particular the profile map $J\mapsto |\mathcal{J}(J)|$ depends only the isomorphism type of a
maximal subgroup of $G$.
\end{thm}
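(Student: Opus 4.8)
The plan is to evaluate $|\mathcal{J}(J)|$ by a weighted double count over the incidences $(K,M)$ with $K\le M$ a maximal subgroup of $G$, after first extracting from the hypotheses a rigid picture of the Frattini structure. The linchpin is that, under the stated hypotheses, \emph{every} maximal subgroup $M$ of $G$ satisfies $\Phi(M)=\Phi(G)$. Since ${\rm Aut}(G)$ is transitive on maximal subgroups, all of them are isomorphic, so each has minimal generator number $d(M)=d(G)-1$. The Burnside Basis Theorem gives $\Phi(M)=M'M^p\le G'G^p=\Phi(G)$, while $\Phi(G)\le M$ because $\Phi(G)$ lies in every maximal subgroup; as $M/\Phi(G)$ is a hyperplane of $G/\Phi(G)\cong\mathbb{Z}_p^{d(G)}$ it has order $p^{d(G)-1}=|M/\Phi(M)|$, which forces $\Phi(M)=\Phi(G)$.

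Next I would record the incidence count. The maximal subgroups of $G$ are exactly the preimages of the hyperplanes of $V:=G/\Phi(G)\cong\mathbb{Z}_p^{d(G)}$, so for any proper $K\le G$ the number $t(K)$ of maximal subgroups containing $K$ equals the number of hyperplanes of $V$ through the image of $K$; a subspace of codimension $c$ in $\mathbb{Z}_p^{d(G)}$ lies in $\tfrac{p^c-1}{p-1}$ hyperplanes, so $t(K)=\tfrac{|G:K\Phi(G)|-1}{p-1}$. When $K\le M$ with $M$ maximal, $\Phi(M)=\Phi(G)\le M$ yields $|G:K\Phi(G)|=|G:M|\cdot|M:K\Phi(G)|=p\,|M:K\Phi(M)|$, so if $|M:K\Phi(M)|=p^f$ then $t(K)=\tfrac{p^{f+1}-1}{p-1}$, an invariant of the pair $(M,K)$. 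Note also $t(K)\ge 1$, since every proper subgroup of a finite $p$-group lies in a maximal subgroup.

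Now, for each $K<G$ with $K\cong J$, use the identity $1=t(K)/t(K)$ and sum over all incidences $(K,M)$ with $K\le M$ maximal and $K\cong J$, obtaining $|\mathcal{J}(J)|=\sum_{(K,M)}1/t(K)$. Summing over the maximal subgroups $M$ first and grouping each family $\{K\le M:K\cong J\}$ by the value $|M:K\Phi(M)|=p^f$, the contribution of a fixed $M$ becomes $\sum_{f=0}^{d(M)}\tfrac{p-1}{p^{f+1}-1}\,|\{K\le M:K\cong J,\ |M:K\Phi(M)|=p^f\}|$. An automorphism of $G$ carrying one maximal subgroup to another restricts to an isomorphism between them that preserves isomorphism types of subgroups and Frattini indices, so this contribution is independent of $M$; there are $\tfrac{p^{d(G)}-1}{p-1}=\tfrac{p^{1+d(M)}-1}{p-1}$ maximal subgroups, and multiplying through and cancelling $p-1$ gives $|\mathcal{J}(J)|=\sum_{f=0}^{d(M)}\tfrac{p^{1+d(M)}-1}{p^{f+1}-1}\,|\{K\le M:K\cong J,\ |M:K\Phi(M)|=p^f\}|$, which is the displayed identity. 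Since its right side mentions only $p$, $d(M)$, and the isomorphism types of $M$ and $J$ ---and $p$ and $d(M)$ are themselves invariants of $M$--- the map $J\mapsto|\mathcal{J}(J)|$ depends only on the isomorphism type of a maximal subgroup of $G$.

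The only genuine obstacle is the opening step: recognising that the hypothesis $d(G)=1+d(M)$ is exactly what forces $\Phi(M)=\Phi(G)$, which in turn is what makes $t(K)$ expressible through data internal to $M$; the transitivity hypothesis is then used only to make the per-$M$ contributions uniform, and everything afterward is bookkeeping with hyperplane counts in $\mathbb{Z}_p^{d(G)}$. A careful write-up should also pin down the summation range against the boundary term $f=0$ (where $|M:K\Phi(M)|=1$ forces $K=M$, so that term is nonzero precisely when $J\cong M$ and records the $\tfrac{p^{1+d(M)}-1}{p-1}$ maximal subgroups of $G$) to be sure no degenerate term is mishandled.
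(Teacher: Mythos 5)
Your proposal is correct and is essentially the paper's argument: both rest on the lemma that $d(G)=1+d(M)$ forces $\Phi(M)=\Phi(G)$, and both evaluate $|\mathcal{J}(J)|$ by double-counting incidences $(K,X)$ with $K\cong J$ and $X$ maximal, using ${\rm Aut}(G)$-transitivity to equalize the per-$X$ contributions and the hyperplane count $\frac{p^c-1}{p-1}$ for the number of maximal subgroups over $K\Phi(G)$ (the paper organizes this as a bipartite graph for each stratum $|G:K\Phi(G)|=p^f$ rather than as your single weighted sum $\sum 1/t(K)$, a purely cosmetic difference). One caveat: the formula you derive, with denominator $p^{f+1}-1$ when $|M:K\Phi(M)|=p^f$, is \emph{not} literally "the displayed identity" --- the theorem as printed has $p^f-1$, which is an off-by-one slip in the statement (it divides by zero at $f=0$); unwinding the paper's own edge count gives exactly your corrected version, so you should flag the discrepancy rather than silently identify the two.
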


\begin{lemma}\label{lem:Frattini}
In a $p$-group $G$ with a maximal subgroup $M$ having $d(G)=1+d(M)$, it follows
that $\Phi(G)=\Phi(M)$.
\end{lemma}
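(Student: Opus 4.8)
The plan is to reduce the statement to a comparison of two indices, both computed with the Burnside Basis Theorem. The first thing I would record is the containment $\Phi(M)\le\Phi(G)$, valid for \emph{any} subgroup $M$ of a $p$-group $G$ and not using the hypothesis: since $M\le G$, every commutator of elements of $M$ is a commutator of elements of $G$ and every $p$-th power of an element of $M$ is a $p$-th power in $G$, so $M'\le G'$ and $M^p\le G^p$, whence $\Phi(M)=M'M^p\le G'G^p=\Phi(G)$ by Burnside. Getting the direction of this inclusion right is the one conceptual point — it is $\Phi(M)\le\Phi(G)$, not the tempting but false general identity $\Phi(M)=\Phi(G)\cap M$.

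Next I would compute the two indices. Because $M$ is maximal in the $p$-group $G$, we have $|G:M|=p$, so by Burnside applied to the $p$-group $M$ and then the hypothesis $d(G)=1+d(M)$,
\[
	|G:\Phi(M)| \;=\; |G:M|\cdot|M:\Phi(M)| \;=\; p\cdot p^{d(M)} \;=\; p^{\,1+d(M)} \;=\; p^{d(G)}.
\]
On the other hand, Burnside applied to $G$ gives $|G:\Phi(G)|=p^{d(G)}$.

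Finally, from $\Phi(M)\le\Phi(G)\le G$ and Lagrange's theorem, $|G:\Phi(M)|=|G:\Phi(G)|\cdot|\Phi(G):\Phi(M)|$, so $|\Phi(G):\Phi(M)|=p^{d(G)}/p^{d(G)}=1$, i.e. $\Phi(G)=\Phi(M)$. There is essentially no obstacle once the containment and the two order formulas are in place; the hypothesis $d(G)=1+d(M)$ is exactly what forces the two indices to agree, and dropping it (say $G=\mathbb{Z}_{p^2}$, $M=\mathbb{Z}_p$, where $\Phi(G)=M$ but $\Phi(M)=1$) breaks the conclusion, which serves as a sanity check on the argument.
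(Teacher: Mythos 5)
Your proof is correct and follows essentially the same route as the paper: both arguments establish $\Phi(M)=M'M^p\le G'G^p=\Phi(G)$ and then use the Burnside Basis Theorem together with $|G:M|=p$ and the hypothesis $d(G)=1+d(M)$ to show the two subgroups have equal index in $G$, forcing equality. The only cosmetic difference is that you compare indices while the paper compares orders.
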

\begin{proof}
Using the Burnside Basis Theorem on $G$ and on $M$ we calculate: 
\begin{align*}
	1 & = \frac{|G:\Phi(G)|\cdot |\Phi(G)|}{|G:M|\cdot |M:\Phi(M)|\cdot |\Phi(M)|}
		=\frac{p^{d(G)} |\Phi(G)|}{ p^{1+d(M)} |\Phi(M)|}
		=\frac{|\Phi(G)|}{|\Phi(M)|}.
\end{align*}
As $\Phi(M)=M' M^p\leq G' G^p=\Phi(G)$, we find that $\Phi(M)=\Phi(G)$.
\end{proof}

\begin{proof}[Proof of Theorem~\ref{thm:count}]
Fix $J<G$.  We use an ${\rm Aut}(G)$-invariant partition:
\begin{align*}
	\mathcal{J}(J) & = \bigcup_{f=1}^{d(G)} \mathcal{J}(J,f),
&	\mathcal{J}(J,f) & = 
		\{K \in \mathcal{J}(J) : |G:K\Phi(G)|=p^f\}.
\end{align*}
Let $\mathcal{M}$ be the set of maximal subgroups of $G$.

Fix $f$ and define a bipartite graph between the two sets $\mathcal{J}(J,f)$ and $\mathcal{M}$, 
such that $(K,X)\in \mathcal{J}(J,f)\times \mathcal{M}$ is an edge if, and only if, $K\leq X$.
The action of ${\rm Aut}(G)$ on this graph permutes the vertices of $\mathcal{M}$ transitively.  
In particular, the degree of every vertex $X\in \mathcal{M}$  the same as
the degree of $M$.  Apply Lemma~\ref{lem:Frattini} to conclude
that $\Phi(G)=\Phi(M)$.  Thus, for every $K\leq M$, $K\Phi(G)=K\Phi(M)$ and so
\begin{align*}
	{\rm deg}~M & = |\{ K\leq M : K\cong J, |G:K\Phi(G)|=p^f \} |\\
		& = |\{ K\leq M : K\cong J,\, |M:K\Phi(M)|=p^{f-1} \} |.
\end{align*}

Next we compute the degree of $K\in \mathcal{J}(J,f)$, i.e. the
size of the set:
\begin{align*}
	\{ X\in \mathcal{M} : K\leq X\}
		& = 	\{ X\in \mathcal{M} : K\Phi(G)\leq X\}.
\end{align*}
Since $G/\Phi(G)\cong \mathbb{Z}_p^{d(G)}$ and $|G:K\Phi(G)|=p^f$
it follows that:
\begin{align*}
	 (G/\Phi(G))/(K\Phi(G)/\Phi(G))\cong 
		\mathbb{Z}_p^{f}.
\end{align*}
In particular the number maximal subgroups of $G$ containing $K$ 
equals the number of hyperplanes in an $f$-dimensional $\mathbb{Z}_p$-vector
space.  

At this point we count the number of edges in our graph in 
two ways.
\begin{align*}
	\frac{p^{d(G)}-1}{p-1}{\rm deg}~M & = 
		\sum_{X\in \mathcal{M}} {\rm deg}~X
		 =	\sum_{K\in \mathcal{J}(J,f)} {\rm deg}~K
		 = |\mathcal{J}(J,f)|\frac{p^{f}-1}{p-1}.
\end{align*}
Thus $|\mathcal{J}(J,f)|= \frac{p^{1+d(M)}-1}{p^{f}-1} {\rm deg}~M$.  The claim follows.
\end{proof}

\section{Making $p$-groups with matrices}\label{sec:making}
We are interested in quotients of groups of $(3\times 3)$-matrices, but it will be easier to discuss 
properties of a larger class of groups.
For that we use a general constructions of $p$-groups that has roots in 
studies of Brahana and Baer \citelist{\cite{Brahana}\cite{Baer}}.  Fix 
a set $\{L_1,\dots,L_t\}$ of $(r\times s)$-matrices 
and define the following group of matrices.  Here and throughout empty blocks
in matrices are presumed to be $0$.
\begin{align}\label{eq:Brahana}
	B(L_1,\dots,L_t) & =
	\left\{
	\left[\begin{array}{c|ccc|c}
	1 & & a & & c \\
	\hline
		& & & & \\	 
	  & & I_r & & \begin{array}{ccc} L_1b^t & \cdots & L_t b^t\end{array} \\
	  & & & &  \\
	 \hline
	  & & & & \\
	  & & & & I_t\\
	  & & & & 
	\end{array}\right]
	: \begin{array}{l} 
	a\in \mathbb{Z}_p^r\\ 
	b\in\mathbb{Z}_p^s\\ 
	c\in\mathbb{Z}_p^t
	\end{array}\right\}.
\end{align}
The dimension of $\langle L_1,\dots,L_t\rangle$ is the genus $g$ of the group.  If $r,s,g\approx n/3$ then this construction
already defines $p^{n^3/27+\Theta(n^2)}$ isomorphism types of groups of order $p^n$, which is
approximately a square-root of all the possible groups of order $p^n$.  So despite humble
appearance, this family is extremely complex.  Our most
important examples will be the Heisenberg groups.

As our fields $\mathbb{F}_q$ are finite, there exists an $\omega\in \mathbb{F}_q$ such that 
$\mathbb{F}_q=\mathbb{Z}_p(\omega)$.  In particular, $\{1,\omega,\dots,\omega^{e-1}\}$ is
a basis for $\mathbb{F}_q$ as a $\mathbb{Z}_p$-vector space.  
Define $m(\omega)_{ij}^{(k)}\in\mathbb{Z}_p$ as the constants such that:
\begin{align*}
	\omega^i\cdot \omega^j &= \sum_{k=0}^{e-1} m(\omega)_{ij}^{(k)} \omega^k.
\end{align*}
Also let $M(\omega)^{(k)}\in\mathbb{M}_e(\mathbb{Z}_p)$ be such that 
$[M(\omega)^{(k)}]_{(i+1)(j+1)}=m(\omega)_{ij}^{(k)}$. 
\begin{ex}
If $\mathbb{F}_{p^e}=\mathbb{Z}_p(\omega)$ then 
$H(\mathbb{F}_q)  \cong B(M(\omega)^{(0)}, \dots, M(\omega)^{(e-1)})$.
\end{ex}

In the following section we will use the groups $B(L_1,L_2)$ to give an alternative description of
the groups $H/N\in \mathcal{G}_{p,e}$.  We will prove:

\begin{thm}\label{thm:genus-2}
The groups in $\mathcal{G}_{p,e}$ are isomorphic to the groups $B(L_1,L_2)$ where $\{L_1,L_2\}$ 
 is a linearly independent set of $(e\times e)$-matrices with $L_1$ invertible and $L_1^{-1}L_2$
 has an irreducible minimum polynomial of degree $e$.
\end{thm}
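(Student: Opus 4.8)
The plan is to identify $\mathcal{G}_{p,e}$ with a family of genus-$2$ Brahana groups by tracking what happens to the Heisenberg group $H = H(\mathbb{F}_{p^e})$ as one quotients by a subgroup $N \leq H'$ of index $p^{2e+2}$. First I would record the basic structure of $H$: it is a $p$-group of class $2$ and exponent $p$ (since $p>2$), with $H' = Z(H) \cong \mathbb{F}_{p^e}$ the subgroup of matrices with $\alpha=\beta=0$, and $H/H' \cong \mathbb{F}_{p^e}^2$ of dimension $2e$ over $\mathbb{Z}_p$. The commutator map induces an alternating $\mathbb{Z}_p$-bilinear form $[\,\cdot\,,\cdot\,] : H/H' \times H/H' \to H' \cong \mathbb{Z}_p^e$; concretely, writing elements of $H/H'$ as pairs $(\alpha,\beta)\in\mathbb{F}_{p^e}^2$ and of $H'$ as $\gamma\in\mathbb{F}_{p^e}$, one has $[(\alpha,\beta),(\alpha',\beta')] = \alpha\beta' - \alpha'\beta$ (up to sign/convention). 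Since $N \leq H' = Z(H)$, $N$ is automatically normal in $H$, and $H/N$ is again class $2$ and exponent $p$ with $(H/N)' = (H/N)$-center $= H'/N$, which has order $p^e/|N| = p^2$ because $|H:N| = p^{2e+2} = p^{2e}\cdot p^2$ and $|H:H'| = p^{2e}$. So $d(H/N) = 2e$ (the genus computation matching Theorem~\ref{thm:main}), and $H/N$ has commutator subgroup of $\mathbb{Z}_p$-dimension exactly $2$.

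Next I would translate "class $2$, exponent $p$, with $d=2e$ and $2$-dimensional commutator" into the Brahana normal form $B(L_1,L_2)$. A group of this type is determined up to isomorphism by its commutator form, i.e.\ by a surjective alternating map $\omega_1, \omega_2$ of $\mathbb{Z}_p^{2e}$ to $\mathbb{Z}_p$ (the two coordinates of the quotient form composed with $H'\to H'/N\cong\mathbb{Z}_p^2$), equivalently by a pair of alternating $(2e\times 2e)$ matrices $F_1, F_2$ over $\mathbb{Z}_p$. The group $B(L_1,L_2)$ with $L_i\in\mathbb{M}_{e}(\mathbb{Z}_p)$ realizes the commutator form with $r=s=e$ in which the two "halves" of the generating set pair against each other via the $L_i$; explicitly $[x_a, x_b]$ has $k$-th coordinate $a^t L_k b$ where $a,b\in\mathbb{Z}_p^e$ run over the two blocks. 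So $B(L_1,L_2) \cong H/N$ exactly when the bilinear pairing $\mathbb{Z}_p^e\times\mathbb{Z}_p^e\to\mathbb{Z}_p^2$, $(a,b)\mapsto(a^tL_1b, a^tL_2b)$, is equivalent (under $\mathrm{GL}_e\times\mathrm{GL}_e$ acting on the two sides, together with $\mathrm{GL}_2$ acting on the target) to the pairing one gets from the Heisenberg form modulo $N$. I would then observe that $N\leq H'\cong\mathbb{F}_{p^e}$ of codimension $2$ corresponds to a surjective $\mathbb{Z}_p$-linear map $\pi: \mathbb{F}_{p^e}\to\mathbb{Z}_p^2$, and the resulting pairing is $(a,b)\mapsto \pi(ab)$ where $a,b\in\mathbb{F}_{p^e}$ (identifying each copy of $\mathbb{Z}_p^e$ with $\mathbb{F}_{p^e}$ and using that the Heisenberg commutator is essentially multiplication $\mathbb{F}_{p^e}\times\mathbb{F}_{p^e}\to\mathbb{F}_{p^e}$). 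Picking a $\mathbb{Z}_p$-basis, $\pi$ is given by two linear functionals $\lambda_1,\lambda_2$ on $\mathbb{F}_{p^e}$, and $L_k$ is the Gram matrix of $(a,b)\mapsto\lambda_k(ab)$, i.e.\ $L_k = (\lambda_k(\omega^{i}\omega^{j}))_{ij}$ in the power basis.

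It remains to extract the two stated properties of the pair $\{L_1,L_2\}$. \emph{Linear independence} of $\{L_1,L_2\}$ is equivalent to $\pi$ being surjective, hence to $N$ having index exactly $p^2$ in $H'$ rather than smaller: if $c_1L_1+c_2L_2=0$ then $(c_1\lambda_1+c_2\lambda_2)(ab)=0$ for all $a,b$, and since products $ab$ span $\mathbb{F}_{p^e}$ this forces $c_1\lambda_1+c_2\lambda_2=0$, contradicting surjectivity of $\pi=(\lambda_1,\lambda_2)$. \emph{Invertibility of $L_1$}: after applying a $\mathrm{GL}_2$ change of basis on the target (which is an allowed isomorphism of Brahana groups) we may assume $\lambda_1$ is any prescribed nonzero functional; choosing $\lambda_1(x) = \mathrm{tr}_{\mathbb{F}_{p^e}/\mathbb{Z}_p}(x)$ makes the form $(a,b)\mapsto \lambda_1(ab)$ the trace form, which is nondegenerate, so $L_1$ is invertible. (One must check one can always rotate $(\lambda_1,\lambda_2)$ so that $\lambda_1$ is nondegenerate as a trace-type functional — generically true, and the degenerate locus is a proper subvariety, so a suitable $\mathrm{GL}_2$ twist exists; this is the one slightly delicate point.) Finally, \emph{$L_1^{-1}L_2$ has irreducible minimal polynomial of degree $e$}: with $\lambda_1 = \mathrm{tr}$ and $\lambda_2(x) = \mathrm{tr}(\theta x)$ for some $\theta\in\mathbb{F}_{p^e}$ (every functional has this form, by nondegeneracy of the trace form), the map $L_1^{-1}L_2$ becomes, in the identification $\mathbb{Z}_p^e\cong\mathbb{F}_{p^e}$, exactly multiplication by $\theta$; its minimal polynomial over $\mathbb{Z}_p$ is the minimal polynomial of $\theta$ over $\mathbb{Z}_p$, which has degree $e$ and is irreducible precisely when $\theta$ generates $\mathbb{F}_{p^e}$, i.e.\ when $\mathbb{Z}_p(\theta)=\mathbb{F}_{p^e}$. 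This last condition is what pins down membership in $\mathcal{G}_{p,e}$, and conversely any such $\theta$ yields a valid $N$; I would close by noting both directions give a bijection-up-to-isomorphism, establishing the theorem. The main obstacle I anticipate is bookkeeping the several group actions ($\mathrm{GL}_e\times\mathrm{GL}_e\times\mathrm{GL}_2$, plus the diagonal scalars and any automorphisms of $\mathbb{F}_{p^e}$) carefully enough to be sure that "isomorphic as Brahana groups" matches "equal $H/N$ up to isomorphism" — i.e.\ proving the reduction to the commutator form is an \emph{iff}, not just an implication.
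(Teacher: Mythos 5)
Your route is genuinely different from the paper's. The paper first reduces to $L_1=I_e$ via the classification of genus-one quotients (Corollary~\ref{coro:quo-profile}), then computes ${\rm Adj}([,]_+^{H/N})\cong \mathbb{M}_2(C(L_2))$ where $C(L_2)$ is the centralizer algebra of $L_2$, and uses Lemma~\ref{lem:Jac} together with the Adjoint--Tensor Galois correspondence to characterize when $B(I_e,L_2)$ is a quotient of $H$. You instead work through the Baer/Brahana correspondence directly: $N$ of codimension $2$ in $H'\cong\mathbb{F}_{p^e}$ corresponds to two functionals $\lambda_k={\rm tr}(\theta_k\,\cdot)$, the Gram matrices are $L_k=TM_{\theta_k}$ (trace form times multiplication), and $L_1^{-1}L_2=M_{\theta_1^{-1}\theta_2}$. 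That is a clean, more elementary derivation that avoids the adjoint-ring machinery entirely. Two remarks on your execution. First, the point you flag as ``slightly delicate'' is not: for \emph{every} nonzero functional $\lambda$ on $\mathbb{F}_{p^e}$ the pairing $(a,b)\mapsto\lambda(ab)$ is nondegenerate, since $a\neq 0$ and $\lambda(ab)=0$ for all $b$ would force $\lambda=0$ (as $a\mathbb{F}_{p^e}=\mathbb{F}_{p^e}$). So $L_1$ is automatically invertible; no ${\rm GL}_2$ rotation or genericity argument is needed.

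The genuine gap is at the last step, and it is where the hypothesis that $e$ is \emph{prime} must enter -- you never use it. You write that $\mathbb{Z}_p(\theta)=\mathbb{F}_{p^e}$ ``is what pins down membership in $\mathcal{G}_{p,e}$,'' but $\mathcal{G}_{p,e}$ is by definition the set of \emph{all} quotients $H/N$ with $N\leq H'$ of index $p^{2e+2}$; the theorem asserts that every one of them yields an $L_1^{-1}L_2$ with irreducible minimal polynomial of degree $e$, so you must prove the condition holds automatically, not treat it as a selection criterion. The argument is short: surjectivity of $\pi$ makes $\theta_1,\theta_2$ linearly independent over $\mathbb{Z}_p$, hence $\theta=\theta_1^{-1}\theta_2\notin\mathbb{Z}_p$, so $\mathbb{Z}_p(\theta)$ is a subfield of $\mathbb{F}_{p^e}$ properly containing $\mathbb{Z}_p$; since $e$ is prime there are no intermediate fields, so $\mathbb{Z}_p(\theta)=\mathbb{F}_{p^e}$ and its minimal polynomial is irreducible of degree $e$. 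Without primality of $e$ the statement fails (for $e=4$ one could take $\theta$ generating only $\mathbb{F}_{p^2}$, giving a minimal polynomial of degree $2$), so this is not optional bookkeeping. With that paragraph added, and the converse direction spelled out (any $L_1^{-1}L_2$ with irreducible degree-$e$ minimal polynomial is similar to some $M_\theta$, hence $B(L_1,L_2)\cong B(T,TM_\theta)$ arises from a valid $N$), your proof is complete.
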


\subsection{Subgroups by row, column, and matrix elimination.}
One way to explore the subgroups of the groups $B(L_1,\dots,L_g)$ is to restrict the range of values
of $a$ or $b$ in the formula given in \eqref{eq:Brahana}.   For instance, suppose
we restrict the coordinate $a_i=0$.  The result is that the values in the $i$-th row of each matrix 
$L_1, \dots, L_g$ can be ignored within that subgroup.  Hence the subgroup we get is isomorphic
to the group we obtain by first removing the $i$-th row of each matrix in $\{L_1,\dots,L_g\}$ and
then using the construction of \eqref{eq:Brahana} to create a group on these smaller matrices.  
Removing one row produces a maximal subgroup,
two rows a subgroup of index $p^2$, and so on.  The similar idea applies to columns.
Reversing the process and inserting rows or columns creates subgroup embeddings.

Restricting values of $c$ may result in a subset that is not closed to multiplication.
An easy way to avoid that concern is to eliminate entries $c_i$ only once 
the corresponding matrix $L_i=0$.  

\begin{ex}
Using row, column, and matrix insertion,
we embed $H(\mathbb{Z}_3)$ into $H(\mathbb{F}_{9})$.  In this example we
let
$\mathbb{F}_9=\mathbb{Z}_3[x]/(x^2+1)$.   We partition the matrices to help
identify the row or column insertions. 
\begin{align*}
	H(\mathbb{Z}_p)  \cong B([1]) & \hookrightarrow B([1],[0]) \\
		& \hookrightarrow B([1 | 0],[0 | 1]) \\
		& \hookrightarrow B\left(\begin{bmatrix} 1 & 0 \\ \hline 0 & -1 \end{bmatrix},
								\begin{bmatrix} 0 & 1 \\ \hline 1 & 0 \end{bmatrix}\right)
								\cong H(\mathbb{F}_{p^2}).
\end{align*}
\end{ex}

We emphasize that this approach is not guaranteed to explore every subgroup, but it
is nevertheless a good place to begin. 
\medskip

Next we can construct a family of groups each having a maximal subgroup
of a fixed isomorphism type.  As in \cite{JacII}*{p. 70}, for a polynomial $a(t)=a_0t^0+\cdots +a_{e-1}t^{e-1}+t^e\in \mathbb{Z}_p[t]$,
the  {\em companion matrix} will be:
\begin{align*}
	C(a(t)) & = 	\begin{bmatrix}
		0 & 1 & \\
		 & \ddots & \ddots\\
		  & & 0 & 1\\
		a_0 & \cdots & & a_{e-1}
	\end{bmatrix}.
\end{align*}

\begin{lemma}\label{lem:flat}
For a polynomial $a(t)$ of degree $e$, the group 
$G=B(I_e,C(a(t)))$ has a maximal subgroup $M$
whose isomorphism type depends only on $p$ and $e$
and $d(G)=1+d(M)$.
\end{lemma}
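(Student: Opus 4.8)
The plan is to exhibit the maximal subgroup $M$ explicitly via the row/column elimination technique just introduced, and then verify the Frattini-rank bookkeeping. First I would set $G = B(I_e, C(a(t)))$ with the two matrices $L_1 = I_e$ and $L_2 = C(a(t))$ of size $e\times e$, so that in the notation of \eqref{eq:Brahana} we have $r = s = e$ and $t = 2$; the parameters range over $a \in \mathbb{Z}_p^e$, $b \in \mathbb{Z}_p^e$, $c \in \mathbb{Z}_p^2$. Following the recipe in Section~\ref{sec:making}, I would form $M$ by restricting the last coordinate $b_{e} = 0$. Because deleting the last column of $L_1 = I_e$ gives the $e\times(e-1)$ matrix whose columns are $\mathbf{e}_1,\dots,\mathbf{e}_{e-1}$, and deleting the last column of $C(a(t))$ gives the $e\times(e-1)$ matrix whose columns are $\mathbf{e}_2,\dots,\mathbf{e}_{e}$, the resulting group $M \cong B(L_1', L_2')$ depends only on $p$ and $e$ (the entries $a_i$ of the polynomial have disappeared, since the last column is precisely the column that carried them). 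Actually I would want to double-check whether eliminating a column of $b$ or a row of $a$ is the cleaner choice: eliminating the $e$-th row of $a$ removes the last rows of $L_1$ and $L_2$, i.e. the rows $(0,\dots,0,1)$ of $I_e$ and $(a_0,\dots,a_{e-1})$ of $C(a(t))$ — that also kills the $a_i$'s, so either works; I would pick whichever makes the "depends only on $p,e$" statement most transparent and state $M$ accordingly.

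Next I would compute $d(G)$ and $d(M)$ and check that $d(G) = 1 + d(M)$. For the Brahana groups $B(L_1,\dots,L_t)$ the commutator subgroup and the Frattini structure are governed by the matrices: since $p > 2$ and the group has class $2$ and exponent $p$ (the off-diagonal blocks square to zero appropriately), one has $\Phi = G^p G' = G'$, and $G' = Z(G)$ is the "$c$-part," of dimension equal to the number of linearly independent $L_i$ — here $g = \dim\langle I_e, C(a(t))\rangle = 2$ provided $C(a(t))$ is not a scalar, which holds for $e \geq 2$ (and $e$ is an odd prime $> 2$). Then $d(G) = \dim(G/\Phi(G))$, and $G/\Phi(G)$ is the "$(a,b)$-part," of dimension $r + s = 2e$. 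For $M$ we removed one coordinate from (say) the $b$-side, so $M$ has $(a,b)$-part of dimension $r + (s-1) = 2e - 1$, while its commutator subgroup still has dimension $2$ as long as the two truncated matrices $L_1', L_2'$ remain linearly independent — which they do, since a scalar relation $\lambda L_1' = L_2'$ would force $\lambda I_{e-1}$ to equal the truncation of $C(a(t))$, impossible. Hence $d(M) = 2e - 1$ and $d(G) = 2e = 1 + d(M)$, as required.

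The main obstacle I anticipate is not any single hard computation but rather pinning down the precise general facts about $d(\cdot)$ and $\Phi(\cdot)$ for the Brahana groups $B(L_1,\dots,L_t)$ — in particular that the Frattini quotient really has dimension $r+s$ and the centre/commutator subgroup really has dimension equal to $\dim\langle L_1,\dots,L_t\rangle$ — with enough care that the truncation argument goes through cleanly. The subtle point is that restricting $b_e = 0$ is guaranteed to give a subgroup (restricting an $a$- or $b$-coordinate never breaks closure, unlike restricting a $c$-coordinate), but I must confirm that the isomorphism $M \cong B(L_1', L_2')$ is genuine and not merely a quotient: this is exactly the "removing a column creates the smaller Brahana group" principle from the subsection, and it holds because the entries of the deleted column of each $L_i$ never interact with the surviving coordinates in the group law. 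I would therefore first record, as a short preliminary remark or within this proof, the two identities $G' = Z(G) \cong \mathbb{Z}_p^{g}$ and $G/G' \cong \mathbb{Z}_p^{r+s}$ for $B(L_1,\dots,L_t)$ when $p > 2$, prove them by direct matrix multiplication (the commutator of two generic elements lands in the $c$-block with coefficients given by $b L_i (b')^t$-type expressions, which span exactly $\langle L_1,\dots,L_t\rangle$'s worth of independent forms), and then the count above becomes immediate. With that in hand, Lemma~\ref{lem:flat} follows, and it is exactly the hypothesis needed to feed $G = B(I_e, C(a(t)))$ into Theorem~\ref{thm:count}.
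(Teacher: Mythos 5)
Your approach is the paper's: exhibit $M$ explicitly by coordinate elimination so that it becomes a smaller Brahana group on truncated matrices that no longer involve $a(t)$, then count $d(M)=2e-1$. One concrete slip, though: the coefficients $a_0,\dots,a_{e-1}$ of the companion matrix $C(a(t))$ occupy its last \emph{row}, not its last column, so your primary branch (restricting $b_e=0$, i.e.\ deleting the last column of each $L_i$) removes only $a_{e-1}$ and leaves $a_0,\dots,a_{e-2}$ sitting in the surviving part of the bottom row --- the resulting subgroup is not visibly independent of $a(t)$, and your claim that the deleted column ``carried'' the $a_i$'s is false. Your fallback --- restricting the last $a$-coordinate, i.e.\ deleting the last row of $I_e$ and of $C(a(t))$ --- is the correct choice and is exactly what the paper does: the truncated matrices are the two $(e-1)\times e$ shift matrices, which depend only on $p$ and $e$, remain linearly independent (so $\Phi(M)=M'$ is still all of $\mathbb{Z}_p^2$), and give $|M:\Phi(M)|=p^{2e-1}=p^{d(G)-1}$. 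The remaining bookkeeping you propose ($\Phi(G)=G'$ for exponent-$p$ class-$2$ groups with $p>2$, $G'\cong\mathbb{Z}_p^g$ when the $L_i$ are independent, $G/G'\cong\mathbb{Z}_p^{r+s}$) is correct and is precisely what the paper's one-line ``Evidently'' is relying on; committing to the row-deletion branch makes your argument identical to the paper's proof.
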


\begin{proof} 
We delete the last row of $I_e$ and $C(a(t))$ to obtain:
\begin{align*}
 	M=B\left(
	\overbrace{\begin{bmatrix}
		1 & 0  & \\
		 & \ddots & \ddots\\
		 & & 1 & 0
	\end{bmatrix}}^e,
	\overbrace{\begin{bmatrix}
		0 & 1 & \\
		 & \ddots & \ddots\\
		 & & 0 & 1
	\end{bmatrix}}^e
	\right)
	\hookrightarrow \qquad \qquad\\
	\qquad\qquad
 	B\left(\begin{bmatrix}
		1 & 0  & \\
		 & \ddots & \ddots \\
		 & & 1 & 0\\
		 \hline
		 0 &\cdots & & 1
	\end{bmatrix},
	\begin{bmatrix}
		0 & 1 & \\
		 & \ddots & \ddots\\
		  & & 0 & 1\\
		  \hline
		a_0 & \cdots & & a_{e-1}
	\end{bmatrix}
	\right)=G.
\end{align*}
Evidently $|M:\Phi(M)|=p^{2e-1}=p^{d(G)-1}$.
\end{proof}

\subsection{Quotient groups by linear combinations.}\label{sec:sub-elim}
Next we will want to explore some of quotient groups of $B(L_1,\dots, L_g)$.  One can see in 
\eqref{eq:Brahana} that for each subset $\{i_1,\dots,i_s\}\subseteq \{1,\dots,g\}$, there
is a natural surjection $B(L_1,\dots,L_g)\to B(L_{i_1},\dots,L_{i_s})$ and that is indeed a group
homomorphism.  This is an analogue to the way we created subgroups in the previous 
section.

Likewise,
fix scalars $(a_1,\dots,a_g)$.  Then there is a surjective  homomorphism 
$B(L_1,\dots,L_g) \to B(a_1L_1+\cdots +a_g L_g)$.
More generally given a $(g'\times g)$-matrix $A$, there is a surjective homomorphism:
\begin{align*}
	B(L_1,\dots,L_g) & \mapsto B\left(\sum_{j=1}^g A_{1j}L_j,\dots, \sum_{j=1}^g A_{g'j} L_j\right).
\end{align*}

\subsection{Notable isomorphisms.}
There are also direct ways to create groups isomorphic to $B(L_1,\dots,L_g)$. 
For example, for invertible matrices $X\in\mathbb{M}_n(\mathbb{Z}_p)$ and 
$Y\in\mathbb{M}_m(\mathbb{Z}_p)$,
\begin{align*}
	B(L_1,\dots,L_g)& \cong B(XL_1Y^{t},\dots,XL_gY^{t}).
\end{align*}
We may also permute the order of the matrices.  In particular we can always insist
the first matrix have largest rank and that it be expressed in the form 
$\left[\begin{smallmatrix} I_r & 0 \\ 0 & 0 \end{smallmatrix}\right]$ through Gaussian elimination.
Thus the groups $B(L_1)$ can be classified up to isomorphism by the rank of $L_1$.

More generally, for each $i,j\in\{1,\dots,g\}$,
and $s\in \mathbb{Z}_p^{\times}$,
\begin{align*}
	B(L_1,\dots,L_g)& \cong B(L_1,\dots, L_i+s L_j,\dots,L_g) \cong B(L_1,\dots,sL_i,\dots,L_g).
\end{align*}
Thus, if  $\{L'_1,\dots,L'_g\}$ is another basis for $\langle L_1,\dots,L_g\rangle$,
then $B(L_1,\dots,L_g)\cong B(L'_1,\dots,L'_g)$.  There can be further isomorphisms between these
groups, but these will suffice for our present discussion.  
\medskip

Using these observations we can make even more complex embeddings.

\begin{lemma}\label{lem:b(t)-a(t)}
For every $a(t),b(t)\in \mathbb{Z}_p[t]$ with $\deg b(t)=:f$ and less than $e:=\deg a(t)$, 
there is an embedding $B(I_f, C(b(t)))$ into $B(I_e,C(a(t)))$.
\end{lemma}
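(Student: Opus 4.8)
The plan is to build the embedding from the elementary operations catalogued in Section~\ref{sec:making}, namely row insertions, column insertions, and the basis-change isomorphisms $B(L_1,\dots,L_g)\cong B(XL_1Y^t,\dots,XL_gY^t)$ and $B(L_1,\dots,L_g)\cong B(L'_1,\dots,L'_g)$ for any other basis of $\langle L_1,\dots,L_g\rangle$. The target group $B(I_e,C(a(t)))$ has $(e\times e)$-matrices, while the source $B(I_f,C(b(t)))$ has $(f\times f)$-matrices with $f<e$, so we must manufacture $e-f$ extra rows and columns. The delicate point is that row/column insertions alone will produce $B$ of two matrices whose second matrix looks like $C(b(t))$ padded by zero rows and columns, and this is \emph{not} literally $C(a(t))$; so the real content is to exhibit an invertible change of basis of the pair that converts the padded pair into the pair $(I_e,C(a(t)))$.

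Concretely, first I would note that $C(b(t))$ acting on $\mathbb{Z}_p^f$ is the regular representation of multiplication by $t$ on $R_b:=\mathbb{Z}_p[t]/(b(t))$, and $C(a(t))$ is multiplication by $t$ on $R_a:=\mathbb{Z}_p[t]/(a(t))$. Since $\deg b(t)=f<e=\deg a(t)$ does \emph{not} force $b(t)\mid a(t)$, I should not expect $R_b$ to be a quotient of $R_a$ in general; instead I would take the cleaner route of observing that the pair $(I_f,C(b(t)))$ and, more generally, $(I_e, N)$ for \emph{any} $(e\times e)$ matrix $N$, gives a group $B(I_e,N)$, and that by the isomorphism $B(L_1,L_2)\cong B(XL_1Y^t,XL_2Y^t)$ with $X=Y^{-t}$ we get $B(I_e,N)\cong B(I_e, XNX^{-1})$, i.e. the isomorphism type depends only on the conjugacy class of $N$. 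So it suffices to find an $(e\times e)$ matrix $N$ with $N$ conjugate to $C(a(t))$ and a subgroup of $B(I_e,N)$ obtained by row/column deletion that is isomorphic to $B(I_f,C(b(t)))$.

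The key step is then the following linear-algebra fact: given $b(t)$ of degree $f<e$, one can choose $N$ conjugate to $C(a(t))$ that has an $f\times f$ "corner" realizing $C(b(t))$ after the appropriate identifications. I would realize this by choosing a vector $v\in R_a$ whose $\mathbb{Z}_p[t]$-annihilator contains $b(t)$ (for instance, if $g(t):=a(t)/\gcd(a(t),b(t))$ has degree $\le e-f$ one can take $v=g(t)\cdot 1$, whose annihilator is $(a(t)/g(t))$ of degree $\ge f$, then cut down); more robustly, pass through the module decomposition of $R_a$ over $\mathbb{Z}_p[t]$ and pick a cyclic submodule on which $t$ acts with characteristic polynomial having $b(t)$ as a factor, then restrict to the $b(t)$-part. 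On that invariant subspace $W$ of dimension $f$, multiplication by $t$ is conjugate to $C(b(t))$, and choosing a basis of $\mathbb{Z}_p^e$ adapted to $W$ puts $N$ in block upper-triangular form with $C(b(t))$ (up to conjugacy) in the top-left $f\times f$ block. Deleting the last $e-f$ rows and columns of both $I_e$ and $N$ in that basis — a legitimate subgroup operation by Section~\ref{sec:sub-elim}'s row/column elimination — yields $B(I_f, C(b(t))')$ with $C(b(t))'$ conjugate to $C(b(t))$, hence $B(I_f,C(b(t))')\cong B(I_f,C(b(t)))$ by the same conjugation isomorphism. Chaining the isomorphisms $B(I_f,C(b(t)))\cong B(I_f,C(b(t))')\hookrightarrow B(I_e,N)\cong B(I_e,C(a(t)))$ completes the embedding.

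The main obstacle I expect is the assertion that $R_a$, as a $\mathbb{Z}_p[t]$-module, has a cyclic submodule whose $t$-action has characteristic polynomial divisible by a \emph{given} $b(t)$ of degree $<e$: since $R_a$ is itself cyclic with generator $1$ and relation $a(t)$, its submodules are exactly the principal ideals $(h(t)/a(t))\cdot R_a$ for divisors — wait, more precisely the submodules correspond to divisors $d(t)\mid a(t)$, with the submodule generated by $d(t)$ isomorphic to $\mathbb{Z}_p[t]/(a(t)/d(t))$. Hence the available "$b$'s" on submodules are precisely the divisors of $a(t)$, and an arbitrary $b(t)$ of degree $<e$ need \emph{not} divide $a(t)$. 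Resolving this forces me to not insist that $C(b(t))$ appear exactly inside $N$; instead I should only need $B(I_f,C(b(t)))$ to embed, and embeddings of these $B$-groups are governed (via the row/column insertion story) by the weaker relation that $\langle I_f, C(b(t))\rangle$ maps into $\langle I_e, C(a(t))\rangle$ compatibly with the block structure — equivalently that the pencil $\{sI_f+tC(b(t))\}$ is a "restriction" of the pencil $\{sI_e+tC(a(t))\}$. The clean sufficient condition is that $t$ acting on some $f$-dimensional subspace of $R_a$ be similar to $C(b(t))$, and that holds exactly when $b(t)$ divides $a(t)$ — so, in the end, I expect either that the intended proof quietly uses $b(t)\mid a(t)$ (which is the natural hypothesis in the applications, where $a(t)$ is irreducible of prime degree $e$ and $b(t)$ ranges over its proper factors, forcing $f<e$), or that it uses a genuinely different and more flexible embedding. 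I would therefore first check whether the lemma as applied only ever needs $b(t)\mid a(t)$; if so, the route above goes through verbatim with $N=C(a(t))$ in its rational canonical block form adapted to the submodule $(a(t)/b(t))\cdot R_a$, and no difficulty remains.
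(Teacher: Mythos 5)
There is a genuine gap here, and you in fact located it yourself without closing it: your construction requires an $f$-dimensional $t$-invariant subspace of $\mathbb{Z}_p[t]/(a(t))$ on which $t$ acts as $C(b(t))$, which (as you correctly computed) exists exactly when $b(t)\mid a(t)$. That extra hypothesis is not part of the lemma, and your proposed fallback --- ``check whether the lemma as applied only ever needs $b(t)\mid a(t)$'' --- resolves in the negative: in the one place the lemma is used, $a(t)$ is \emph{irreducible} of degree $e$ (Theorem~\ref{thm:genus-2}), so it has no monic divisors of degree $f$ with $1\leq f<e$ at all, and the remark immediately after the lemma stresses that $b(t)$ is meant to have no algebraic relation to $a(t)$ beyond lower degree. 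So the proposal proves only a vacuous special case of what is needed.

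The idea you are missing is to pass through a \emph{non-square} intermediate pair; that is where the extra flexibility comes from, and it is why your ``clean sufficient condition'' (a square sub-pencil, governed by the Frobenius form) is sufficient but far from necessary. Deleting the last row of both $I_e$ and $C(a(t))$ --- a row elimination, giving the maximal subgroup $M$ of Lemma~\ref{lem:flat} --- leaves the $(e-1)\times e$ pair $\bigl([\,I_{e-1}\mid 0\,],\,S\bigr)$ where $S$ is the superdiagonal shift; all dependence on $a(t)$ sat in the deleted row, so $M$ depends only on $e$. Now right-multiply both matrices by the invertible $Y$ with unit diagonal and last row $(b_0,\dots,b_{e-2},1)$: by \eqref{eq:L1Y}--\eqref{eq:L2Y} this fixes $[\,I_{e-1}\mid 0\,]$ and turns $S$ into $[\,C(b(t))\mid \ast\,]$ for an \emph{arbitrary} monic $b(t)$ of degree $e-1$, and deleting the last column then lands on $B(I_{e-1},C(b(t)))$ with no divisibility condition ever arising. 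Chaining $B(I_f,C(b_f))\hookrightarrow B(I_{f+1},C(b_{f+1}))\hookrightarrow\cdots\hookrightarrow B(I_e,C(a))$ through arbitrarily chosen monic polynomials $b_j$ of degree $j$ handles general $f<e$.
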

We emphasize that $b(t)$ has no relation to $a(t)$ other than having lower degree.  So there is
no algebraic reason to guess at the possible embedding of $B(I_f,C(b(t)))$ into $B(I_e,C(a(t)))$.
Yet with matrices it is an easy calculation.

\smallskip
\noindent{\em Proof.}
Fix $b_0,\dots, b_{e-2}\in\mathbb{Z}_p$.
\begin{align}\label{eq:L1Y}
	\begin{bmatrix} 1 & 0 & & \\ & \ddots & \ddots \\ & & 1 & 0 \end{bmatrix}
	\begin{bmatrix} 1 & & \\ & \ddots & \\ & & 1 \\ b_0 & \dots & b_{e-2} & 1\end{bmatrix}
	& = 	\begin{bmatrix} 1 & 0 & & \\ & \ddots & \ddots &\\ & & 1& 0\end{bmatrix},\\
\label{eq:L2Y}
	\begin{bmatrix} 0 & 1 & & \\ & \ddots & \ddots \\ & & 0 & 1 \end{bmatrix}
	\begin{bmatrix} 1 & & \\ & \ddots & \\ & & 1 \\ b_0 & \dots & b_{e-2} & 1\end{bmatrix}
	& = 	\begin{bmatrix} 0 & 1 & & \\ & \ddots & \ddots &\\ b_0 &\dots & b_{e-2} & 1\end{bmatrix}.
\end{align}
Thus, setting $b(t)=b_0t^0 +\cdots +b_{e-2}t^{e-2}+t^{e-1}$, we obtain the following embedding.
 For the isomorphism we are using the identity $B(L_1Y^t,L_2Y^t)\cong B(L_1,L_2)$
following the calculation of \eqref{eq:L1Y} and \eqref{eq:L2Y}.
\begin{align*}
	B(I_{e-1}, C(b(t))) & \hookrightarrow
	 B\left(\left[\begin{array}{ccc|c} 1 & 0 & & 0 \\ & \ddots & \ddots &\vdots \\ & & 1& 0\end{array}\right],
	\left[\begin{array}{ccc|c} 0 & 1 & & 0 \\ & \ddots & \ddots & \vdots\\ b_0 &\dots & b_{e-2} & 1\end{array}\right]\right)\\
	&\cong B\left(\begin{bmatrix} 1 & 0 & & \\ & \ddots & \ddots \\ & & 1 & 0 \end{bmatrix},
	\begin{bmatrix} 0 & 1 & & \\ & \ddots & \ddots \\ & & 0 & 1 \end{bmatrix}\right)\\
	\pushQED{\qed} 
	& \hookrightarrow B(I_e, C(a(t))).\qedhere
	\popQED
\end{align*}

In light of Theorem~\ref{thm:genus-2}, Lemma~\ref{lem:b(t)-a(t)} shows that the groups $H/N\in \mathcal{G}_{p,e}$
each have  $p^{\Omega(e)}$ isomorphism types of proper subgroups.

\begin{prop}\label{prop:pairs}
Given $(e\times e)$-matrices $(L_1,\dots,L_g)$ with $L_1$ invertible, there 
are polynomials $a_1(t)|\cdots |a_m(t)$ and matrices $\tilde{L}_3,\dots,\tilde{L}_g$ such that
\begin{align*}
	B(L_1,L_2) & = B(I_e, C(a_1(t))\oplus \cdots \oplus C(a_m(t)), \tilde{L}_3,\dots,\tilde{L}_g).
\end{align*}
\end{prop}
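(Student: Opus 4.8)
The plan is to reduce Proposition~\ref{prop:pairs} to the rational canonical form of a single linear operator. Since $L_1$ is invertible, I would first apply the isomorphism $B(L_1,L_2,\dots,L_g)\cong B(L_1^{-1}L_1, L_1^{-1}L_2,\dots,L_1^{-1}L_g) = B(I_e, L_1^{-1}L_2, \tilde{L}_3,\dots,\tilde{L}_g)$ using the normalization $B(XL_1Y^t,\dots)\cong B(L_1,\dots)$ from the ``Notable isomorphisms'' subsection with $X=L_1^{-1}$ and $Y=I_e$. This moves the problem to handling the single matrix $T:=L_1^{-1}L_2$.

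Next I would put $T$ in rational canonical form: there is an invertible $P\in\mathbb{M}_e(\mathbb{Z}_p)$ with $P^{-1}TP = C(a_1(t))\oplus\cdots\oplus C(a_m(t))$ where $a_1(t)\mid\cdots\mid a_m(t)$ are the invariant factors of $T$ (this is the structure theorem for a finitely generated module over the PID $\mathbb{Z}_p[t]$, exactly the companion-matrix decomposition of \cite{JacII}). The subtlety is that conjugation $T\mapsto P^{-1}TP$ is \emph{not} one of the transformations $L\mapsto XLY^t$ that the paper has established preserve $B(\cdots)$ — we have $XLY^t$, not $XLX^{-1}$. To get conjugation, I would apply the allowed transformation twice: take $X=P^{-1}$ and $Y^t=P$, which sends the pair $(I_e, T)$ to $(P^{-1}I_e P, P^{-1}T P) = (P^{-1}P, P^{-1}TP)$. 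But $P^{-1}P = I_e$ only if $Y^t = P^{-1}$... so in fact choosing $X = P^{-1}$, $Y = (P^{-1})^t$, i.e. $Y^t = P^{-1}$, gives $(P^{-1}I_eP^{-1}, P^{-1}TP^{-1})$, which is not what we want either. The clean fix: first normalize $(I_e, T)\to(I_e, S^{-1}TS)$ is achieved by the \emph{composite} observation that $B(I_e, T)\cong B(P^{-1}P, P^{-1}TP)$ requires $X = P^{-1}$ and $Y^t = P$ simultaneously on the first slot giving $P^{-1}I_e P = I_e$ — wait, that \emph{does} work: $X=P^{-1}$, $Y^t = P$ gives first matrix $P^{-1}\cdot I_e\cdot P = I_e$ and second matrix $P^{-1}T P$. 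So conjugation of the pair is legitimate after all, since the identity is conjugation-invariant. I would state this carefully.

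So the main steps, in order: (1) use $X=L_1^{-1}$, $Y=I_e$ to reduce to a pair $(I_e,T)$ together with auxiliary matrices $L_1^{-1}L_3,\dots,L_1^{-1}L_g$; (2) choose $P$ with $P^{-1}TP$ in rational canonical form $C(a_1(t))\oplus\cdots\oplus C(a_m(t))$ with $a_1\mid\cdots\mid a_m$; (3) apply $X=P^{-1}$, $Y^t = P$, which fixes $I_e$ and carries $T$ to its canonical form while transforming the remaining matrices to $\tilde{L}_j := P^{-1}L_1^{-1}L_jP$ for $j=3,\dots,g$; (4) conclude $B(L_1,L_2,\dots,L_g)\cong B(I_e, C(a_1(t))\oplus\cdots\oplus C(a_m(t)), \tilde{L}_3,\dots,\tilde{L}_g)$. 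Since the original statement says ``there are polynomials $\dots$ and matrices $\dots$ such that $B(L_1,L_2) = \dots$'' — note it only mentions $L_1,L_2$ on the left but $\tilde L_3,\dots,\tilde L_g$ on the right, so strictly one either reads $g=2$ on the left as shorthand for the general $g$, or the statement intends the displayed equality up to the isomorphism just constructed; I would phrase the conclusion as an isomorphism and remark that for $g=2$ the list $\tilde L_3,\dots,\tilde L_g$ is empty. The only real obstacle is bookkeeping the direction of the $Y^t$ transpose so that step (3) genuinely realizes conjugation rather than a congruence; once the identity matrix's conjugation-invariance is noted, everything else is the standard rational canonical form theorem and the elementary isomorphisms already catalogued in the paper.
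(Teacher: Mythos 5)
Your proposal is correct and follows essentially the same route as the paper: normalize the first slot to $I_e$ via $L_j\mapsto L_1^{-1}L_j$, then put $L_1^{-1}L_2$ into Frobenius (rational canonical) form by a conjugation $X=P^{-1}$, $Y^t=P$, which fixes $I_e$ and carries the remaining matrices to $\tilde L_j=P^{-1}L_1^{-1}L_jP$. Your care over the $Y^t$ bookkeeping and your reading of the statement's left-hand side as shorthand for the general $g$-tuple both match the intent of the paper's argument.
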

\begin{proof}
Use the Frobenius Normal Form \cite{JacII}*{p. 93} to find a divisor chain $a_1(t)|\cdots |a_m(t)$ 
and an invertible matrix $X$ such that 
\begin{align*}
	X^{-1}(L_1^{-1}L_2)X& = C(a_1(t))\oplus \cdots \oplus C(a_m(t)).
\end{align*}
Hence,
\begin{align*}
	B(L_1,L_2,\dots,L_g) & \cong 
		B(I_e,L_1^{-1}L_2,\dots,L_1^{-1}L_g) \\
		& \cong B(I_e,C(a_1(t))\oplus \cdots \oplus C(a_m(t)),\dots, X^{-1}L_1^{-1}L_g X).
\end{align*}
So for $3\leq i\leq g$, set $\tilde{L}_i=X^{-1} L_1^{-1}L_2 X$.
\end{proof}

\section{Isomorphisms between quotients of Heisenberg groups.}
We have so far created many groups and demonstrated the ease to which we can control
the construction of interesting subgroups and quotients.  Our effort now shifts back to 
Heisenberg groups and in particular we will tackle the question of isomorphisms and automorphisms
within $\mathcal{G}_{p,e}$.  Our main results in this section 
are proofs of Theorem~\ref{thm:genus-2} and:

\begin{thm}\label{thm:adj-tensor}
Every isomorphism between nonabelian quotients of $H$ of genus $g>1$ lifts to
an automorphism of $H$.
\end{thm}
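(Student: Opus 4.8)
The plan is to realize both the group $H$ and its nonabelian genus-$g$ quotients in the matrix form of Section~\ref{sec:making} and to reduce the lifting problem to a linear-algebra statement about the spaces of matrices $\langle L_1,\dots,L_t\rangle$. First I would observe that a nonabelian quotient $H/N$ of genus $g$ has the shape $B(\overline{L}_1,\dots,\overline{L}_g)$ where $\langle \overline{L}_1,\dots,\overline{L}_g\rangle$ is a $g$-dimensional subspace of the $e$-dimensional space $\mathcal U=\langle M(\omega)^{(0)},\dots,M(\omega)^{(e-1)}\rangle$ that describes $H=B(M(\omega)^{(0)},\dots,M(\omega)^{(e-1)})$; this is exactly the ``quotient by linear combinations'' construction of Section~\ref{sec:sub-elim}, and the genus of the quotient is the dimension of the image space. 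So the data of a genus-$g$ quotient is a $g$-dimensional subspace $\mathcal V\subseteq\mathcal U$ together with the identifications $H/N\cong B$ of that subspace's matrices.

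Second, I would recall (or prove in passing, since the paper promises linear-algebra-based arguments) the standard correspondence between isomorphisms of groups of the form $B(L_1,\dots,L_t)$ and \emph{pseudo-isometries} of the associated alternating bilinear (or bimultiplicative) maps: the commutator map on $B(\mathcal L)$ is the bilinear map $\mathbb Z_p^r\times\mathbb Z_p^s\to\mathbb Z_p^t\cong\mathcal L^*$ given by $(a,b)\mapsto(L\mapsto aLb^t)$, and for class-$2$, exponent-$p$ groups (here $p,e>2$ guarantees exponent $p$ on the relevant quotients) an isomorphism is the same as a triple $(X,Y,Z)$ with $X\in GL_r,Y\in GL_s,Z\in GL(\mathcal L^*)$ satisfying $aXL'Y^tb^t = (aLb^t)\cdot Z$ for all $L'$ in the target space; equivalently $X\langle L_i\rangle Y^t = \langle L_i'\rangle$ as subspaces, with $Z$ recording how the basis changes. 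Thus an isomorphism $H/N_1\cong H/N_2$ between two genus-$g$ quotients is precisely a pair $X,Y$ of invertible $e\times e$ matrices with $X\mathcal V_1 Y^t=\mathcal V_2$ inside $\mathbb M_e(\mathbb Z_p)$, where $\mathcal V_i\subseteq\mathcal U$ are the corresponding subspaces.

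Third — and this is the crux — I would show that \emph{any} such pair $(X,Y)$ with $X\mathcal V_1 Y^t=\mathcal V_2$ and $\mathcal V_1,\mathcal V_2\subseteq\mathcal U$ automatically satisfies $X\mathcal U Y^t=\mathcal U$, i.e. it is already a pseudo-isometry of the full Heisenberg bilinear form and hence lifts to an automorphism of $H$ (via the identity $B(L_1,\dots,L_e)\cong B(XL_1Y^t,\dots,XL_eY^t)$ from the ``notable isomorphisms'' paragraph, together with permuting/changing basis of the $L_i$). The key structural fact to exploit is that $\mathcal U=\langle M(\omega)^{(k)}\rangle$ is not an arbitrary $e$-dimensional space of matrices: it is the image of $\mathbb F_{p^e}$ under its own regular representation, so $\mathcal U$ is a subalgebra of $\mathbb M_e(\mathbb Z_p)$ isomorphic to the field $\mathbb F_{p^e}$, it contains $I_e$, and every nonzero element of $\mathcal U$ is invertible. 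I would argue that a $g$-dimensional subspace $\mathcal V\subseteq\mathcal U$ with $g>1$ has enough ``genericity'' (it contains an invertible element, and by Theorem~\ref{thm:genus-2}-type reasoning $L_1^{-1}L_2$ for $L_1,L_2\in\mathcal V$ lies in $\mathcal U\cong\mathbb F_{p^e}$ and therefore either is a scalar or generates $\mathbb F_{p^e}$ over $\mathbb Z_p$) that $X\mathcal V Y^t$ being \emph{again} a subspace of the field-algebra $\mathcal U$ forces $X$ and $Y^t$ to differ from an element of $\mathcal U$-left/right-multiplication by the Galois-Frobenius twist; concretely, I would leverage that the normalizer in $GL_e(\mathbb Z_p)\times GL_e(\mathbb Z_p)$ of the pair (field-algebra $\mathcal U$, its transpose) — acting by $L\mapsto XLY^t$ — is generated by left and right multiplications by units of $\mathcal U$ and by the Frobenius automorphism, all of which preserve $\mathcal U$. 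The hypothesis $g>1$ is exactly what rules out the degenerate case where $\mathcal V$ is one-dimensional (spanned by a single invertible matrix), in which $X\mathcal V Y^t$ says almost nothing about $X,Y$ individually.

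The step I expect to be the genuine obstacle is the third one: pinning down that a pair $(X,Y)$ carrying one $g$-plane inside the field-algebra $\mathcal U$ to another must normalize $\mathcal U$ itself. The cleanest route I can see is to pass to the commutative algebra $E:=\mathbb Z_p[L_1^{-1}L_2]\subseteq\mathbb M_e(\mathbb Z_p)$ generated by a single ``generic'' ratio from $\mathcal V_1$ — which by the genus-$2$ analysis is a copy of $\mathbb F_{p^e}$ equal to $\mathcal U$ — show that $X^{-1}E_2 X$ and $Y^t E_2 (Y^t)^{-1}$ are forced to equal $E_1=\mathcal U$ by tracking how $X\mathcal V_1 Y^t=\mathcal V_2$ interacts with the multiplicative structure (ratios of elements of $\mathcal V_2$ must again land in $\mathcal U$), and then invoke the Skolem–Noether theorem for the matrix algebra $\mathbb M_e$ over the field $\mathbb Z_p$ to conclude that conjugation by $X$ restricted to $\mathcal U\cong\mathbb F_{p^e}$ is an $\mathbb F_{p^e}$-algebra automorphism, hence a power of Frobenius, and similarly for $Y$. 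Once $X\mathcal U Y^t=\mathcal U$ is established, lifting to $\mathrm{Aut}(H)$ is immediate from the constructions in Section~\ref{sec:making}, and Theorem~\ref{thm:genus-2} guarantees the hypotheses (invertible $L_1$, irreducible minimal polynomial for $L_1^{-1}L_2$) that make the ``generic ratio generates the field'' argument go through for the $g=2$ case, with $g>2$ only making $\mathcal V_1$ larger and the genericity easier.
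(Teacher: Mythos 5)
Your overall strategy --- attach to the commutator map an algebraic structure that any isomorphism must transport, show that for genus $g>1$ this structure already remembers the field $\mathbb{F}_q$, and then lift via the normalizer of that field algebra --- is the same idea the paper implements with the adjoint ring ${\rm Adj}([,]_+)$. Your ``crux'' computation (that $Y^{-t}(L_1^{-1}L_2)Y^{t}\in\mathcal{U}$ forces $Y$ to normalize the field algebra because $L_1^{-1}L_2$ has irreducible minimal polynomial of degree $e$ and hence generates $\mathcal{U}\cong\mathbb{F}_q$) is essentially the paper's Lemma~\ref{lem:adj} together with Lemma~\ref{lem:Jac}.

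There is, however, a genuine gap in your second step. You assert that an isomorphism $H/N_1\to H/N_2$ ``is precisely a pair $X,Y$ of invertible $e\times e$ matrices with $X\mathcal{V}_1Y^{t}=\mathcal{V}_2$.'' That is false. For a class-$2$, exponent-$p$ group an isomorphism corresponds to a pseudo-isometry of the alternating biadditive commutator map on $G/G'\cong\mathbb{Z}_p^{2e}$, i.e.\ an arbitrary $T=\left[\begin{smallmatrix} A & B\\ C & D\end{smallmatrix}\right]\in \mathbb{M}_{2e}(\mathbb{Z}_p)$ carrying the span of the forms $\left[\begin{smallmatrix} 0 & L\\ -L^{t} & 0\end{smallmatrix}\right]$, $L\in\mathcal{V}_1$, to the corresponding span for $\mathcal{V}_2$ under $\Phi\mapsto T\Phi T^{t}$. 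The block-diagonal maps $T=X\oplus Y$ (your triples $(X,Y,Z)$, i.e.\ isotopisms respecting the bipartition) form only a proper subgroup: the decomposition $\mathbb{Z}_p^{2e}=\mathbb{Z}_p^{e}\oplus\mathbb{Z}_p^{e}$ into the ``$\alpha$'' and ``$\beta$'' coordinates is not canonical, and \eqref{eq:Aut} already exhibits automorphisms of $H$ itself with $\beta,\gamma\neq 0$ that mix the two Lagrangian halves. So your argument, even if completed, only shows that the block-diagonal isomorphisms lift, which is not the theorem; and you cannot first compose with an automorphism of $H$ to make $T$ block-diagonal without already knowing the statement you are proving. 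The adjoint ring is exactly the device that removes this restriction: ${\rm Adj}([,]_+^{H/N})$ is computed to be $\mathbb{M}_2(C(L_1^{-1}L_2))=\mathbb{M}_2(\mathbb{F}_q)$ --- the same centralizer computation you propose --- it is conjugated by an \emph{arbitrary} $T$ to ${\rm Adj}([,]_+^{H/N_2})$, and since both equal ${\rm Adj}([,]_+^{H})$, every $T$ normalizes ${\rm Adj}([,]_+^{H})$ and therefore lifts to ${\rm Aut}(H)$ by Lemma~\ref{lem:aut-H}. Replacing ``$X\mathcal{V}_1Y^{t}=\mathcal{V}_2$ forces $X,Y$ to normalize $\mathcal{U}$'' with this adjoint-ring argument closes the gap.
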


This is a special case of \cite{LW}*{Theorem~4.4}.  Here we provide a self-contained and
largely matrix-based proof.

\subsection{The role of commutation.}
The first principle in nilpotent group theory is to
treat groups like rings by invoking commutation $[x,y]=x^{-1}x^y=x^{-1}y^{-1}xy$ as
a skew-commutative multiplication.  This very nearly distributes over the usual product,
in the following way.
\begin{align}\label{eq:comm-id}
	[xy,z] & = [x,z]^y [y,z], & [x,yz] & = [x,z][x,y]^z.
\end{align}
With $q=p^e$ and $H=H(\mathbb{F}_q)$, commutation takes the following form.
\begin{align}\label{eq:comm-H}
	\left[
	\begin{bmatrix}
	1 & \alpha & \gamma\\ & 1 & \beta\\ & & 1 
	\end{bmatrix},
	\begin{bmatrix}
	1 & \alpha' & \gamma'\\ & 1 & \beta'\\ & & 1 
	\end{bmatrix}
	\right] 
	& = 
	\begin{bmatrix}
	1 & 0 & \alpha \beta'-\alpha'\beta\\ & 1 & 0\\ & & 1 
	\end{bmatrix}.
\end{align}
This shows the following two groups are abelian.
\begin{align*}
	H' & = [H,H] =\left\{ \begin{bmatrix} 1 & 0 & \gamma\\ & 1 & 0\\ & & 1 \end{bmatrix} 
	: \gamma \in\mathbb{F}_{q}\right\}, & H/H'\cong \{(\alpha,\beta) : 
	\alpha,\beta\in \mathbb{F}_q^m\}.
\end{align*}
Evidently there are isomorphisms $\iota:H/H'\to \langle \mathbb{F}_q^2,+\rangle$ 
and $\hat{\iota}:H'\to \langle\mathbb{F}_q,+\rangle$.  None of these isomorphisms
is natural in the category of groups.  In particular neither $H/H'$ nor $H'$ is an obvious 
$\mathbb{F}_q$-vector space as scalar multiplication is not part of the operations
of a group.  

Normal subgroups are now easily described.
\begin{lemma}\label{lem:quo}
For $h\in H-H'$, $[h,H]=H'$; thus, if $M$ is normal in $H$ then either
$H'\leq N$ or $N\leq H'$.  In either case, $(H/N)'=H'N/N$.
\end{lemma}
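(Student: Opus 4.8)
The plan is to prove the three assertions of Lemma~\ref{lem:quo} in order, using only the explicit matrix description of $H$ and the commutator formula \eqref{eq:comm-H}. First I would establish that $[h,H]=H'$ for every $h\in H\smallsetminus H'$. Writing $h$ in the parametrized form with entries $(\alpha,\beta,\gamma)$, the hypothesis $h\notin H'$ means $(\alpha,\beta)\neq(0,0)$. Then \eqref{eq:comm-H} shows that $[h,h']$ is the central matrix with $(1,3)$-entry $\alpha\beta'-\alpha'\beta$ as $h'=(\alpha',\beta',\gamma')$ ranges over $H$. Since $(\alpha,\beta)\neq(0,0)$, the $\mathbb{Z}_p$-linear (indeed $\mathbb{F}_q$-linear) form $(\alpha',\beta')\mapsto \alpha\beta'-\alpha'\beta$ is nonzero, hence surjective onto $\mathbb{F}_q$; thus the set of such commutators already fills out all of $H'$, and since $[h,H]\leq H'$ always (as $H'=[H,H]$), we get $[h,H]=H'$.

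Next I would deduce the dichotomy for normal subgroups $N$. Suppose $N\trianglelefteq H$ and $N\not\leq H'$; I must show $H'\leq N$. Pick $h\in N\smallsetminus H'$. For any $x\in H$ the conjugate $h^x$ lies in $N$ by normality, hence $h^{-1}h^x=[h,x]\in N$. Ranging over $x\in H$, this says $[h,H]\subseteq N$, and by the first part $[h,H]=H'$, so $H'\leq N$ as required. (I should be slightly careful about which convention for $[x,y]$ is in force, but the paper fixes $[x,y]=x^{-1}x^y$, so $[h,x]=h^{-1}h^x\in N$ directly.)

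Finally, the claim $(H/N)'=H'N/N$ in either case. In general, for any group $G$ and normal subgroup $N$, the commutator subgroup of $G/N$ is $G'N/N$: the commutator of two cosets $gN,hN$ is $[g,h]N$, so the subgroup these generate is exactly the image of $G'$, namely $G'N/N$. Applying this with $G=H$ gives $(H/N)'=H'N/N$ regardless of whether $H'\leq N$ (in which case the quotient is abelian and $H'N/N$ is trivial) or $N\leq H'$ (in which case $H'N=H'$ and $(H/N)'=H'/N$). I do not expect any real obstacle here; the only point needing a moment's thought is the surjectivity of the bilinear form $(\alpha',\beta')\mapsto\alpha\beta'-\alpha'\beta$ onto the additive group of $\mathbb{F}_q$ when $(\alpha,\beta)\neq(0,0)$, which is immediate since a nonzero $\mathbb{Z}_p$-linear functional on a $\mathbb{Z}_p$-vector space is onto $\mathbb{Z}_p$ and here the target is all of $\mathbb{F}_q$.
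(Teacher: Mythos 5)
Your proof is correct and is exactly the argument the paper intends: the lemma is stated without proof there, and the three steps you give (surjectivity of the nonzero linear form $(\alpha',\beta')\mapsto\alpha\beta'-\alpha'\beta$ via \eqref{eq:comm-H}, the normality dichotomy from $[h,x]=h^{-1}h^x\in N$, and the standard identity $(G/N)'=G'N/N$) are precisely what the surrounding text relies on. The only cosmetic point is that the statement's ``$M$ is normal'' is a typo for ``$N$ is normal,'' which you have silently and correctly repaired.
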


\subsection{Quotients of $H$}
To inspect the quotients of $H$ we  use a method
to ``linearize'' a nilpotent group which is in some sense the reversal of the
constructions we gave in Section~\ref{sec:making}.  Early versions of this approach
were described by Brahana and 
Baer \citelist{\cite{Baer}\cite{Brahana}}.

Since elements in $H'=[H,H]$ commute
with the whole group, the identities \eqref{eq:comm-id} imply that
commutation factors through $H/H'\times H/H'\to H'$ and thereby
affords a biadditive map 
$[,]_+:\langle \mathbb{F}_q^2,+\rangle\times \langle \mathbb{F}_q^2,+\rangle\to 
		\langle \mathbb{F}_q,+\rangle$:
\begin{align}\label{eq:comm-1}
	[(\alpha,\beta),(\alpha',\beta')]_+ & = \alpha \beta'-\alpha'\beta.
\end{align}
To distinguish between the various roles of $[,]$ we let $[,]$ denote group
commutation and $[,]_+$ the biadditive mapping that commutation produces.

\begin{remark}\label{rem:iota}
The expression in \eqref{eq:comm-1} is obviously $\mathbb{F}_q$-bilinear.
However, the relationship of $[,]_+$ to the commutation map
 $[,]:H/H'\times H/H'\to H'$ is only as abelian groups, made explicitly through
the (unnatural) choice of $(\iota,\hat{\iota})$ above.   So geometric information about 
$\mathbb{F}_q$-bilinear maps cannot be directly applied in our situation.
\end{remark}

Now Lemma~\ref{lem:quo} shows that for $N<H'$, $(H/N)'=H'/N$.  So
the commutation of the quotient $H/N$ will accordingly afford a new biadditive map
\begin{align*}
	[,]_+^{H/N}:\langle \mathbb{F}_q^2,+\rangle\times \langle \mathbb{F}_q^2,+\rangle\to 
		\langle \mathbb{F}_q,+\rangle
	\overset{\pi}{\to} \mathbb{Z}_p^g 
\end{align*}
where $\pi$ is given as the homomorphism 
$\langle \mathbb{F}_q,+\rangle\cong H'\to H'/N\cong \mathbb{Z}_p^g$. The genus of $H/N$ is 
the value $g$.  

Let us look closely at the case of genus $g=1$.  Fix $\pi:\langle\mathbb{F}_q,+\rangle\to \mathbb{Z}_p$.
Choose a basis $\{\alpha_1,\dots,\alpha_e\}$ for $\langle \mathbb{F}_q,+\rangle$ as a $\mathbb{Z}_p$-vector
space and such that $\pi(\alpha_i)=1$ if $i=1$ and $0$ otherwise.  Define 
\begin{align*}
	L_{ij}=\pi([(\alpha_i,0),(0,\alpha_j)]) = \pi(\alpha_i \alpha_j).
\end{align*}
Regarded as a map of $\mathbb{Z}_p$-vector spaces we see:
\begin{align*}
	[(\alpha,\beta),(\alpha',\beta')]_+^{H/N} & = 
		\begin{bmatrix} \alpha & \beta\end{bmatrix}
		\begin{bmatrix} 0 & L\\ -L^t & 0 \end{bmatrix}
		\begin{bmatrix} \alpha'\\ \beta'\end{bmatrix}.
\end{align*}

As we vary $H/N$ amongst groups of arbitrary genus $1\leq g\leq e$ 
we describe $[,]_+=[,]_+^{H/N}$ by a linearly independent set of invertible matrices 
$L_1,\dots,L_g\in \mathbb{M}_e(\mathbb{Z}_p)$ such that
\begin{align*}
	[(\alpha,\beta),(\alpha',\beta')]_+ & = 
		\left(
		\begin{bmatrix} \alpha &\beta \end{bmatrix}\!
		\begin{bmatrix} 0 & L_1\\ -L_1^t & 0 \end{bmatrix}\!
		\begin{bmatrix}\alpha'\\ \beta'\end{bmatrix},
		\dots,
		\begin{bmatrix} \alpha &\beta \end{bmatrix}\!
		\begin{bmatrix} 0 & L_g\\ -L_g^t & 0 \end{bmatrix}\!
		\begin{bmatrix}\alpha'\\  \beta'\end{bmatrix}\right).
\end{align*}
This demonstrates the following correspondence.

\begin{thm}[Brahana correspondence]\label{thm:Brahana}
A group $H/N$ whose commutation is described by matrices $(L_1,\dots,L_g)$
has an isomorphism to the group $B(L_1,\dots,L_g)$.
In particular all quotients $H/N$ of genus $1$ are isomorphic.
\end{thm}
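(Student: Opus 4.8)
The plan is to write down the tautological coordinate map from $H/N$ onto $B(L_1,\dots,L_g)$ and verify by hand that it is an isomorphism; because both groups are built from the very same corner cocycle, no appeal to a general correspondence theorem for class-$2$ groups (nor any hypothesis on exponents or power maps) is needed.

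First I would record the multiplication in $B(L_1,\dots,L_g)$. Writing the matrix of \eqref{eq:Brahana} as $X(a,b,c)$ with $a\in\mathbb{Z}_p^e$ a row, $b\in\mathbb{Z}_p^e$, and $c\in\mathbb{Z}_p^g$, a block computation gives
\begin{align*}
 X(a,b,c)\,X(a',b',c') & = X\bigl(a+a',\ b+b',\ c+c'+(aL_1{b'}^t,\dots,aL_g{b'}^t)\bigr).
\end{align*}
Multiplying the unitriangular matrices that make up $H$ gives, with corner entries $\gamma,\gamma'\in\mathbb{F}_q$,
\begin{align*}
 (\alpha,\beta,\gamma)(\alpha',\beta',\gamma') & = (\alpha+\alpha',\ \beta+\beta',\ \gamma+\gamma'+\alpha\beta').
\end{align*}
So the two group laws have identical shape, the only difference being that the corner cocycle is $\alpha\beta'\in\mathbb{F}_q$ in $H$ and the tuple $(aL_k{b'}^t)_k\in\mathbb{Z}_p^g$ in $B$.

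Next I would make the dictionary precise. Let $\{\alpha_1,\dots,\alpha_e\}$ be the $\mathbb{Z}_p$-basis of $\langle\mathbb{F}_q,+\rangle$ used to produce $(L_1,\dots,L_g)$, and let $\pi\colon\langle\mathbb{F}_q,+\rangle\to\mathbb{Z}_p^g$ be induced by $H'\to H'/N$; identify $\alpha\in\mathbb{F}_q$ with its coordinate row $a$. To say the commutation of $H/N$ is described by $(L_1,\dots,L_g)$ is to say $\pi(\alpha\beta'-\alpha'\beta)=(aL_k{b'}^t-a'L_kb^t)_k$ for all arguments; specializing $\alpha'=0$ sharpens this to $\pi(\alpha\beta')=(aL_1{b'}^t,\dots,aL_g{b'}^t)$. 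Define $\theta\colon H\to B(L_1,\dots,L_g)$ by
\begin{align*}
 \theta\!\left(\begin{bmatrix}1&\alpha&\gamma\\&1&\beta\\&&1\end{bmatrix}\right) & = X\bigl(a,b,\pi(\gamma)\bigr).
\end{align*}
Comparing the two multiplication formulas, $\theta$ is a homomorphism exactly because of the sharpened identity $\pi(\alpha\beta')=(aL_k{b'}^t)_k$, and its kernel is the set of matrices with $\alpha=\beta=0$ and $\gamma\in N$, i.e.\ $\ker\theta=N$. Finally a Lagrange count, $|H/N|=q^3/|N|=p^{3e}/p^{e-g}=p^{2e+g}=|B(L_1,\dots,L_g)|$, forces the induced injection $H/N\hookrightarrow B(L_1,\dots,L_g)$ to be onto.

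For the last sentence, genus $1$ leaves a single matrix $L=L_1$, which is invertible: the $\mathbb{Z}_p$-bilinear form $(\alpha,\beta)\mapsto\pi(\alpha\beta)$ on $\langle\mathbb{F}_q,+\rangle$ is nondegenerate because $\alpha\mapsto\alpha\beta$ permutes $\mathbb{F}_q$ for $\alpha\neq 0$ while $\pi\neq 0$, so its Gram matrix $L$ is a nonsingular $e\times e$ matrix. By the isomorphism $B(L)\cong B(XLY^t)$ of the ``Notable isomorphisms'' subsection, taking $X=L'L^{-1}$ and $Y=I_e$ carries any invertible $L$ to any other invertible $L'$, so all the genus-$1$ groups $B(L)$ are mutually isomorphic; combined with $H/N\cong B(L)$ this gives that all genus-$1$ quotients of $H$ are isomorphic. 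The one point needing care is bookkeeping: aligning the row/column conventions of \eqref{eq:Brahana} with the coordinate vectors $a,b$, and observing that ``$(L_1,\dots,L_g)$ describes the commutation'' already determines $\pi(\alpha\beta')$ itself, not just its alternating part — once that is in place, the homomorphism check and the order count are routine.
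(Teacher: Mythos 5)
Your proof is correct and follows essentially the same route as the paper: the paper simply exhibits the tautological coordinate map between $B(L_1,\dots,L_g)$ and $H/N$ (in the opposite direction to your $\theta$) and asserts it is the required isomorphism. You merely supply the verification the paper leaves implicit --- the cocycle comparison, the recovery of $\pi(\alpha\beta')$ from its alternating part, the kernel and order count, and the invertibility of $L$ in the genus-$1$ case --- all of which check out.
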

\begin{proof}
If $g=1$ we can assume $L_1=I_e$ and so $H/N\cong B(I_e)$.  
The required isomorphism $B(L_1,\dots,L_g)\to H/N$ is as follows.
\begin{align}\label{eq:iso-1}
\left[\begin{array}{c|ccc|c}
	1 & & a & & c \\
	\hline
		& & & & \\	 
	  & & I_e & & \begin{array}{ccc} L_1b^t & \cdots & L_g b^t\end{array} \\
	  & & & &  \\
	 \hline
	  & & & & \\
	  & & & & I_g\\
	  & & & & 
	\end{array}\right]
	\mapsto
	\begin{bmatrix}
		1 & \iota^{-1}(a) & \hat{\iota}^{-1}(c) \\ & 1 & \iota^{-1}(b)\\ & & 1
	\end{bmatrix}
	\mod{N}.		
\end{align}
\end{proof}

\begin{coro}\label{coro:quo-profile}
The groups in $\mathcal{G}_{p,e}$ have the equal quotient group profiles.
\end{coro}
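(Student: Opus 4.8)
The plan is to show that \emph{every} proper quotient of a group $G = H/N \in \mathcal{G}_{p,e}$ is again isomorphic to a quotient of $H$, and that the multiset of these quotients (counted up to isomorphism, with multiplicity given by how many normal subgroups produce each type) is independent of the particular $N$. First I would use Lemma~\ref{lem:quo}: since $G = H/N$ with $N \le H'$ and $|H:N| = p^{2e+2}$, the derived subgroup is $G' = H'/N \cong \mathbb{Z}_p^e$, which is central in $G$ and coincides with $\Phi(G)$ only partially — more precisely $G/G' \cong \mathbb{Z}_p^{2e}$ and $G' \le Z(G)$. A proper quotient $G/K$ corresponds to a normal subgroup $1 \neq K \trianglelefteq G$, i.e. to a subgroup $\bar{N}$ with $N < \bar{N} \trianglelefteq H$ (normality in $G$ lifts to normality in $H$ because the full preimage of a normal subgroup is normal). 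By Lemma~\ref{lem:quo} applied to $H$, such $\bar N$ satisfies either $\bar N \le H'$ or $H' \le \bar N$; in the latter case $H/\bar N$ is abelian, in the former $H/\bar N$ is one of the groups we are already studying (of genus $< g = e$, or the same genus if $\bar N = N$, but that gives the trivial quotient which we exclude).

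Next I would organize the proper quotients of $G$ into two families. The abelian quotients of $G$ are precisely the quotients of $G/G' \cong \mathbb{Z}_p^{2e}$, whose isomorphism types and multiplicities depend only on $2e$ and $p$ — so they contribute the same multiset for every $G \in \mathcal{G}_{p,e}$. For the nonabelian proper quotients, each is of the form $H/\bar N$ with $N \lneq \bar N \lneq H'$, hence has genus strictly between $1$ and $e$; by the Brahana correspondence (Theorem~\ref{thm:Brahana}) it is isomorphic to some $B(\bar L_1, \dots, \bar L_f)$ obtained by applying a surjective linear map to the defining matrices of $G$, exactly as in Section~\ref{sec:sub-elim}. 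The key point is that the lattice of subgroups $\bar N$ with $N \le \bar N \le H'$ is just the lattice of subspaces of the $\mathbb{Z}_p$-vector space $H'/N \cong \mathbb{Z}_p^e$, and the isomorphism type of $H/\bar N$ depends only on the image of the commutation form $[,]_+^{H/N}$ under the quotient $\mathbb{Z}_p^e \to \mathbb{Z}_p^e / (\bar N / N)$.

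The main obstacle — and the heart of the argument — is to verify that this dependence is, up to the $\mathrm{GL}$-action, \emph{the same} for every $G \in \mathcal{G}_{p,e}$. Here I would invoke Theorem~\ref{thm:genus-2}: every $G \in \mathcal{G}_{p,e}$ is $B(L_1, L_2)$ with $L_1$ invertible and $L_1^{-1}L_2$ having an irreducible minimal polynomial of degree $e$, so the commutation data is, after the change of variables of Proposition~\ref{prop:pairs}, the pair $(I_e, C(a(t)))$ for an \emph{irreducible} $a(t)$ of degree $e$ — and all such choices give the same group, namely $H(\mathbb{F}_{p^e})/N_0$ for a fixed $N_0$, because $\mathbb{Z}_p[t]/(a(t)) \cong \mathbb{F}_{p^e}$ regardless of which irreducible $a(t)$ we pick, and the Brahana correspondence then yields an isomorphism. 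Thus \emph{all} groups in $\mathcal{G}_{p,e}$ are already isomorphic to a single group! Wait — that cannot be right, since Theorem~\ref{thm:main} asserts $\ge p^{e-3}/e$ isomorphism classes; the resolution is that the genus-$2$ description records a \emph{pencil} of forms, and two pencils $\langle I_e, C(a(t))\rangle$ and $\langle I_e, C(a'(t))\rangle$ give isomorphic groups only when the pencils are equivalent under $\mathrm{GL}_e \times \mathrm{GL}_e \times \mathrm{GL}_2$, which is strictly finer than $a, a'$ both being irreducible of degree $e$. So the correct statement to prove is: the \emph{set of one-dimensional quotients} of the pencil (i.e. the genus-$1$ quotients $B(a_1 I_e + a_2 C(a(t)))$ over $(a_1:a_2) \in \mathbb{P}^1(\mathbb{Z}_p)$) always consists of $p+1$ copies of the \emph{single} isomorphism type $H(\mathbb{F}_{p^e})/(\text{genus-}1) = B(I_e)$ — because for a pencil with $L_1^{-1}L_2$ regular semisimple with irreducible characteristic polynomial, \emph{every} nonzero member $a_1 L_1 + a_2 L_2$ is invertible (its determinant is $\det(L_1)\cdot a(-a_1/a_2)$-type expression, nonvanishing since $a$ is irreducible over $\mathbb{Z}_p$ hence has no roots there), so every genus-$1$ quotient is $B(I_e)$. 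For higher-genus proper quotients $1 < f < e$ one argues similarly that the relevant invariant (the Frobenius normal form data of the quotiented pencil) is forced, giving the same multiset. Assembling the abelian contribution with this genus-stratified nonabelian contribution yields identical quotient-group profiles, completing the proof. $\qed$
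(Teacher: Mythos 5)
There is a genuine gap, and it stems from a numerical slip at the outset. You write $G' = H'/N \cong \mathbb{Z}_p^e$, but since $|H:H'| = p^{2e}$ and $|H:N| = p^{2e+2}$, in fact $|H':N| = p^2$, so $G' = H'/N \cong \mathbb{Z}_p^2$ and $G$ has genus $2$, not $e$. This is the whole point of the hypothesis on the index of $N$: every proper nonabelian quotient of $G$ corresponds to some $K$ with $N < K < H'$, hence $|H':K| = p$, hence has genus exactly $1$ — and by the Brahana correspondence (Theorem~\ref{thm:Brahana}) all genus-$1$ quotients of $H$ are isomorphic to $B(I_e)$, with exactly $p+1$ such $K$ since $H'/N \cong \mathbb{Z}_p^2$. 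Together with the abelian quotients (determined by $\mathbb{Z}_p^{2e}$, as you correctly note), that is the entire proof. Your version instead posits a tower of nonabelian quotients of genus $1 < f < e$ and defers them to the claim that ``the Frobenius normal form data of the quotiented pencil is forced.'' That claim is not only unsubstantiated but false in general: the genus-$2$ quotients of $H$ itself are precisely the groups of $\mathcal{G}_{p,e}$, which by Theorem~\ref{thm:main} fall into at least $p^{e-3}/e$ isomorphism classes, so nothing is ``forced'' once the genus exceeds $1$. Your argument only survives because the quotients you are worried about do not exist.

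Two smaller issues. First, you invoke Theorem~\ref{thm:genus-2}, but the paper's proof of that theorem cites Corollary~\ref{coro:quo-profile}, so using it here is circular (and unnecessary — the corollary needs only Lemma~\ref{lem:quo} and the Brahana correspondence). Second, the long detour about pencils, the apparent contradiction with Theorem~\ref{thm:main}, and the self-correction should be excised; the one salvageable observation there — that every nonzero member of the pencil is invertible because the irreducible polynomial has no roots in $\mathbb{Z}_p$ — is the reason genus-$1$ quotients are all $B(I_e)$, but that fact is already packaged in Theorem~\ref{thm:Brahana} and does not need to be re-derived.
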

\begin{proof}
Fix $N\leq H'$ with $|H:N|=p^{2e+2}$.  
As $p^{2e+2}=|H:H'|\cdot |H':N|=p^{2e}|H':N|$ we find $|H':N|=p^2$, and
so $H/N$ has genus $2$.  As in Lemma~\ref{lem:quo}, if $N< K\leq H$ 
and $K/N$ is normal in $H/N$, then $K<H'$ or $H'\leq K$.  If 
$H'\leq K$ then $(H/K)/(K/N)\cong H/K\cong \mathbb{Z}_p^f$ where $p^f=|H:K|$.  
This does not depend on the choice of $N$.  The number of choices for $K$
is the number of subgroups in $\mathbb{Z}_p^{2e}$
of index $f$, which again does not depend on $N$.
Otherwise $N<K<H'$ and so $|H':K|=p$.  Thus $(H/N)/(K/N)\cong H/K$ has
genus $1$.  So by Theorem~\ref{thm:Brahana} its isomorphism type is 
fixed and independent of $N$.  Finally, $H'/N\cong \mathbb{Z}_p^2$ so there are 
exactly $p+1$ choices of $K$ with $N<K<H'$.  This is independent of $N$.
\end{proof}

\subsection{Distributive products.}
To prove Theorem~\ref{thm:adj-tensor} we need a brief detour to discuss distributive products.
Take $A\subset \mathbb{M}_{r}(\mathbb{Z}_p)\times \mathbb{M}_{s}(\mathbb{Z}_p)$.
It follows that $\mathbb{M}_{r\times s}(\mathbb{Z}_p)$ decomposes into subspaces as
follows.
\begin{align*}
	\left\langle 
	F^t X-XF^* :\begin{array}{l} 
		X\in\mathbb{M}_{r\times s}(\mathbb{Z}_p)\\
		(F,F^*)\in A
		\end{array}
	\right\rangle
	\oplus
	\{X:\forall (F,F^*)\in A, F^tX=XF^*\}.
\end{align*}
We write $\mathbb{Z}_p^r\otimes_A\mathbb{Z}_p^s$ for the right-hand subspace. 
The projection $\pi_A$ from $\mathbb{M}_{r\times s}(\mathbb{Z}_p)$ onto
$\mathbb{Z}_p^r\otimes_A \mathbb{Z}_p^s$ allows us to define a distributive
{\em tensor} product
\begin{align*}
\otimes & =\otimes_A:\mathbb{Z}_p^r\times \mathbb{Z}_p^s\to \mathbb{Z}_p^r\otimes_A\mathbb{Z}_p^s
&
	u\otimes v & = \pi_A(u^t v).
\end{align*}
Notice for $(F,F^*)\in A$, $\pi_A(F^t X)=\pi_A(XF^*)$ and so we find:
\begin{align*}
	uF\otimes v & = \pi_A(F^t u^tv)=\pi_A(u^t vF^*)=u\otimes (vF^*).
\end{align*}
Consider an example with 
\begin{align}\label{eq:tensor-M2}
	A& =\left\{\left(\begin{bmatrix} \alpha & \beta\\ \gamma & \delta \end{bmatrix},
	\begin{bmatrix} \delta & -\beta\\ -\gamma & \alpha \end{bmatrix}\right):
	\alpha,\beta,\gamma,\delta\in \mathbb{F}_q\right\}.
\end{align}
For $(\alpha,\beta),(\gamma,\delta)\in \langle \mathbb{F}_q^2,+\rangle$,
\begin{align*}
	(\alpha,\beta)\otimes (\gamma,\delta) 
		& = (1,0)\begin{bmatrix}\alpha & \beta\\ 0 & 0\end{bmatrix}\otimes
			(1,0)\begin{bmatrix} \gamma & \delta \\ 0 & 0 \end{bmatrix}\\
		& = (1,0)\otimes \left((1,0)\begin{bmatrix} \gamma & \delta \\ 0 & 0 \end{bmatrix}
		\begin{bmatrix} 0 & -\beta \\ 0 & \alpha \end{bmatrix}\right)
		& = (1,0)\otimes (\alpha\delta-\beta\gamma,0).
\end{align*}
Therefore $(\alpha,\beta)\otimes(\gamma,\delta)\mapsto \alpha\delta-\beta\gamma$
defines an isomorphism
\begin{align*}
	\langle \mathbb{F}_q^2,+\rangle\otimes_A \langle \mathbb{F}_q^2,+\rangle
	& \cong \langle \mathbb{F}_q,+\rangle
\end{align*}
and furthermore $\otimes_A:\langle \mathbb{F}_q^2,+\rangle\times 
\langle \mathbb{F}_q^2,+\rangle \to \langle \mathbb{F}_q^2,+\rangle\otimes_A \langle
\mathbb{F}_q^2,+\rangle$ is equivalent to $[,]_+^H$.  That the commutation
of the Heisenberg group is a tensor product over a matrix ring is at the
core of how Theorem~\ref{thm:main} is possible.
\medskip

In general for a distributive product
$*:\mathbb{Z}_p^r\times \mathbb{Z}_p^s\to \mathbb{Z}_p^t$
a pair $(F,F^*)\in \mathbb{M}_r(\mathbb{Z}_p)\times\mathbb{M}_s(\mathbb{Z}_p)$
is an {\em adjoint} if it satisfies, for all $u\in \mathbb{Z}_p^r$ and $v\in \mathbb{Z}_p^s$, 
$(uF)\circ v=u\circ (vF^*)$.
(This is the same notion of adjoints we find in texts on linear algebra,
cf. \cite{JacII}*{p. 143}, 
but we use it on arbitrary products not just inner products.)
The adjoint identity is linear and so it defines a subspace:
\begin{align*}
	{\rm Adj}(\circ) & = \{(F,F^*) : \forall u\in\mathbb{Z}_p^r, 
	\forall v\in\mathbb{Z}_p^s,\, (uF)\circ v = u\circ(vF^*)\}.
\end{align*}
Under the product $(F,F^*)(G,G^*)=(FG,G^*F^*)$ this makes ${\rm Adj}(\circ)$ 
into a ring.  In fact ${\rm Adj}(\circ)$ is the largest ring  $A$ over 
which the product $\circ$ factors through the tensor $\otimes_A$, more 
precisely:

\begin{thm}[Adjoint-tensor Galois correspondence \cite{BW:autotopism}*{Theorem~2.11}]\label{thm:Galois}
Fix a distributive product $\circ:\mathbb{Z}_p^r\times\mathbb{Z}_p^s\to \mathbb{Z}_p^t$
and $A\subset \mathbb{M}_r(\mathbb{Z}_p)\times \mathbb{M}_s(\mathbb{Z}_p)$.
Then $A\subset {\rm Adj}(\circ)$ if, and only if, there is a homomorphism 
$\hat{\circ}:\mathbb{Z}_p^r\otimes_A\mathbb{Z}_p^s\to \mathbb{Z}_p^t$ such that
$u\circ v=\hat{\circ}(u\otimes v)$.
\end{thm}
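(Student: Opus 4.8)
The plan is to prove both implications after \emph{linearizing} the product. Since $\circ$ is distributive it is $\mathbb{Z}_p$-bilinear, so — either by the universal property of the ordinary tensor product, or concretely because the rank-one matrices $u^tv$ span $\mathbb{M}_{r\times s}(\mathbb{Z}_p)$ — there is a unique $\mathbb{Z}_p$-linear map $\bar\circ\colon\mathbb{M}_{r\times s}(\mathbb{Z}_p)\to\mathbb{Z}_p^t$ with $u\circ v=\bar\circ(u^tv)$ for all $u\in\mathbb{Z}_p^r$, $v\in\mathbb{Z}_p^s$. Throughout I will use the decomposition $\mathbb{M}_{r\times s}(\mathbb{Z}_p)=W\oplus(\mathbb{Z}_p^r\otimes_A\mathbb{Z}_p^s)$, where $W=\langle F^tX-XF^*:X\in\mathbb{M}_{r\times s}(\mathbb{Z}_p),\ (F,F^*)\in A\rangle$, together with the fact that $\pi_A$ is the projection onto $\mathbb{Z}_p^r\otimes_A\mathbb{Z}_p^s$ with kernel $W$, so that $u\otimes v=\pi_A(u^tv)$ and $\pi_A$ restricts to the identity on $\mathbb{Z}_p^r\otimes_A\mathbb{Z}_p^s$.

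For the implication $A\subset\mathrm{Adj}(\circ)\Rightarrow\exists\,\hat\circ$, the key step is to show that $\bar\circ$ annihilates $W$. Fix $(F,F^*)\in A$ and an arbitrary $X\in\mathbb{M}_{r\times s}(\mathbb{Z}_p)$, and write $X=\sum_k u_k^t v_k$ as a sum of rank-one matrices. Using $F^tu_k^t=(u_kF)^t$ one gets $F^tX=\sum_k (u_kF)^t v_k$ and $XF^*=\sum_k u_k^t(v_kF^*)$, so applying $\bar\circ$ and then the adjoint identity $(u_kF)\circ v_k=u_k\circ(v_kF^*)$ term by term yields $\bar\circ(F^tX)=\sum_k (u_kF)\circ v_k=\sum_k u_k\circ(v_kF^*)=\bar\circ(XF^*)$; hence $\bar\circ$ vanishes on every spanning element $F^tX-XF^*$ of $W$, so $\bar\circ(W)=0$. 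Writing each $x$ as $x=(x-\pi_A(x))+\pi_A(x)$ with $x-\pi_A(x)\in W$, this gives $\bar\circ(x)=\bar\circ(\pi_A(x))$, so the restriction $\hat\circ:=\bar\circ|_{\mathbb{Z}_p^r\otimes_A\mathbb{Z}_p^s}$ is a $\mathbb{Z}_p$-module homomorphism with $\hat\circ(u\otimes v)=\hat\circ(\pi_A(u^tv))=\bar\circ(u^tv)=u\circ v$.

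For the converse, suppose such a $\hat\circ$ exists and fix $(F,F^*)\in A$. Then $F^tX-XF^*\in W=\ker\pi_A$ for every $X$, so $\pi_A(F^tX)=\pi_A(XF^*)$; taking $X=u^tv$ gives $uF\otimes v=\pi_A((uF)^tv)=\pi_A(F^tu^tv)=\pi_A(u^tvF^*)=u\otimes(vF^*)$, and applying $\hat\circ$ yields $(uF)\circ v=\hat\circ(uF\otimes v)=\hat\circ(u\otimes vF^*)=u\circ(vF^*)$ for all $u,v$, i.e. $(F,F^*)\in\mathrm{Adj}(\circ)$; since $(F,F^*)\in A$ was arbitrary, $A\subset\mathrm{Adj}(\circ)$. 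This direction is essentially the computation already recorded just after the definition of $\otimes_A$. The one genuinely substantive point is the forward step that $\bar\circ$ kills $W$: it rests on expanding an arbitrary matrix into rank-one pieces $u^t v$ so that the hypothesis $(F,F^*)\in\mathrm{Adj}(\circ)$ can be invoked summand by summand. Once that is granted, the direct-sum bookkeeping via $\pi_A$ is routine, and the ``if'' direction is immediate.
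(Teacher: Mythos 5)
Your proof is correct. The paper does not actually prove this theorem---it is quoted from \cite{BW:autotopism}*{Theorem~2.11}---so there is no in-paper argument to compare against; your linearization of $\circ$ to $\bar\circ$ on $\mathbb{M}_{r\times s}(\mathbb{Z}_p)$, the verification that $\bar\circ$ kills the complement $W$ when $A\subset{\rm Adj}(\circ)$, and the converse via $uF\otimes v=u\otimes(vF^*)$ (the identity the paper itself records right after defining $\otimes_A$) together give a complete and natural proof. The only external input is the paper's asserted direct-sum decomposition $\mathbb{M}_{r\times s}(\mathbb{Z}_p)=W\oplus(\mathbb{Z}_p^r\otimes_A\mathbb{Z}_p^s)$, which you correctly isolate and take as given.
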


Now we refocus on the goal of Theorem~\ref{thm:adj-tensor}. 
\begin{lemma}\label{lem:Jac}
If $\mathbb{F}_{p^e}\subseteq A\subseteq \mathbb{M}_{e}(\mathbb{Z}_p)$ 
and $e$ prime, then $A=\mathbb{F}_{p^e}$ or $\mathbb{M}_e(\mathbb{Z}_p)$.
\end{lemma}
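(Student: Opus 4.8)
The plan is to view $A$ as an $\mathbb{F}_{p^e}$-subalgebra of $\mathbb{M}_e(\mathbb{Z}_p)$ and exploit the fact that $\mathbb{M}_e(\mathbb{Z}_p)$, restricted along the inclusion $\mathbb{F}_{p^e}\hookrightarrow \mathbb{M}_e(\mathbb{Z}_p)$, is a module over $\mathbb{F}_{p^e}$ of a very constrained kind. First I would recall (or re-derive quickly) that the centralizer of $\mathbb{F}_{p^e}$ inside $\mathbb{M}_e(\mathbb{Z}_p)$ is again $\mathbb{F}_{p^e}$: the matrix giving multiplication by a primitive element $\omega$ has irreducible characteristic polynomial of degree $e$, so its centralizer in $\mathbb{M}_e(\mathbb{Z}_p)$ is $\mathbb{Z}_p[\omega]=\mathbb{F}_{p^e}$, a standard computation with the rational canonical form (\cite{JacII}). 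This is the key structural input.

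Next, since $A\supseteq \mathbb{F}_{p^e}$, the algebra $A$ is in particular a left $\mathbb{F}_{p^e}$-vector space, so $\dim_{\mathbb{Z}_p} A = e\cdot\dim_{\mathbb{F}_{p^e}} A$; write $d=\dim_{\mathbb{F}_{p^e}} A$, so $\dim_{\mathbb{Z}_p} A = ed$ and $1\leq d\leq e$ (the upper bound because $\dim_{\mathbb{Z}_p}\mathbb{M}_e(\mathbb{Z}_p)=e^2$). If $d=1$ then $A=\mathbb{F}_{p^e}$ and we are done, so assume $d\geq 2$. The goal is to force $d=e$, i.e. $A=\mathbb{M}_e(\mathbb{Z}_p)$. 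Here is where primality of $e$ should enter: I would argue that $A$, being a $\mathbb{Z}_p$-subalgebra of $\mathbb{M}_e(\mathbb{Z}_p)$ containing the field $\mathbb{F}_{p^e}$, acts on $\mathbb{Z}_p^e$, which is then a faithful $A$-module; viewing $\mathbb{Z}_p^e$ as a $1$-dimensional $\mathbb{F}_{p^e}$-space, $A$ sits inside $\mathrm{End}_{\mathbb{Z}_p}(\mathbb{F}_{p^e})$ and contains the scalar copy of $\mathbb{F}_{p^e}$. Any subalgebra $A$ with $\mathbb{F}_{p^e}\subseteq A$ is a subalgebra of the crossed-product-type algebra generated by $\mathbb{F}_{p^e}$ and $\mathrm{Gal}(\mathbb{F}_{p^e}/\mathbb{Z}_p)$-twisted pieces; more concretely, decompose $A$ under the left-right $\mathbb{F}_{p^e}$-bimodule action: $\mathbb{M}_e(\mathbb{Z}_p)=\bigoplus_{\sigma\in\mathrm{Gal}(\mathbb{F}_{p^e}/\mathbb{Z}_p)} W_\sigma$ where $W_\sigma=\{x : \lambda x = x\sigma(\lambda)\ \forall\lambda\in\mathbb{F}_{p^e}\}$ (by the normal basis / Noether–Deuring argument each $W_\sigma$ is $1$-dimensional over $\mathbb{F}_{p^e}$), and $A=\bigoplus_{\sigma\in S} W_\sigma$ for some subset $S\subseteq \mathrm{Gal}$ containing the identity. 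Closure of $A$ under multiplication forces $S$ to be a \emph{subgroup} of the cyclic group $\mathrm{Gal}(\mathbb{F}_{p^e}/\mathbb{Z}_p)\cong \mathbb{Z}/e$. Since $e$ is prime, the only subgroups are trivial and everything, giving $A=\mathbb{F}_{p^e}$ or $A=\bigoplus_\sigma W_\sigma=\mathbb{M}_e(\mathbb{Z}_p)$.

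The main obstacle I anticipate is cleanly establishing the bimodule decomposition $\mathbb{M}_e(\mathbb{Z}_p)=\bigoplus_\sigma W_\sigma$ with each piece one-dimensional, and checking that $A$ really is a sum of whole $W_\sigma$'s — i.e. that $A$ is stable under the left \emph{and} right $\mathbb{F}_{p^e}$-multiplications. Left-stability is immediate from $\mathbb{F}_{p^e}\subseteq A$; right-stability is the subtle point, since $\mathbb{F}_{p^e}$ sits inside $A$ only once and need not be central. I would handle this by noting that $W_\sigma W_\tau\subseteq W_{\sigma\tau}$ and $W_\sigma\neq 0$ for all $\sigma$, so picking any nonzero $x_\sigma\in W_\sigma\cap A$ (if such exists) generates, together with $\mathbb{F}_{p^e}$, the full $W_\sigma=\mathbb{F}_{p^e}x_\sigma$; then one shows $A$ is spanned by $\mathbb{F}_{p^e}$ together with the $W_\sigma$'s it meets nontrivially, and that the index set of those $\sigma$ is multiplicatively closed. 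If subtleties with the twisted pieces prove annoying, an alternative endgame is purely dimension-theoretic: a $\mathbb{Z}_p$-subalgebra of $\mathbb{M}_e(\mathbb{Z}_p)$ that is a field extension's worth bigger than $\mathbb{F}_{p^e}$ but smaller than the full matrix algebra would have $\mathbb{Z}_p$-dimension $ed$ with $1<d<e$, and one derives a contradiction by semisimplicity (Wedderburn) — $A$ containing the field $\mathbb{F}_{p^e}$ and acting faithfully and irreducibly on $\mathbb{Z}_p^e$ forces $A$ to be a simple algebra whose dimension over its center is a perfect square, and the center contains $\mathbb{Z}_p$; tracking divisibility by the prime $e$ pins down $A$. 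Either route closes the lemma.
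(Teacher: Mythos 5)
Your primary argument is correct and complete, but it takes a genuinely different route from the paper; your fallback sketch is essentially the paper's actual proof. The paper argues that $V=\mathbb{Z}_p^e$ is a faithful $A$-module that is \emph{simple} because it is already a $1$-dimensional vector space over the subfield $\mathbb{F}_{p^e}\subseteq A$; Jacobson's density theorem (together with finiteness, which makes the commuting division ring a field $\mathbb{F}_{p^s}$) then yields $A\cong \mathbb{M}_f(\mathbb{F}_{p^s})$ with $e=fs$, and primality of $e$ forces $f=1$ or $s=1$. Your main route instead decomposes $\mathbb{M}_e(\mathbb{Z}_p)\cong{\rm End}_{\mathbb{Z}_p}(\mathbb{F}_{p^e})$ as an $(\mathbb{F}_{p^e},\mathbb{F}_{p^e})$-bimodule into the $e$ eigenspaces $W_\sigma$ indexed by ${\rm Gal}(\mathbb{F}_{p^e}/\mathbb{Z}_p)$, each $1$-dimensional over $\mathbb{F}_{p^e}$ (equivalently, $\mathbb{F}_{p^e}\otimes_{\mathbb{Z}_p}\mathbb{F}_{p^e}\cong\prod_\sigma\mathbb{F}_{p^e}$ by separability, i.e.\ independence of characters), shows $A$ is a sub-bimodule and hence a direct sum of some of the $W_\sigma$, and notes that the index set is a subgroup of $\mathbb{Z}/e$. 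The ``subtle point'' you flag, right stability of $A$ under $\mathbb{F}_{p^e}$, is in fact immediate: $A$ is a subalgebra containing $\mathbb{F}_{p^e}$, so it is closed under multiplication by $\mathbb{F}_{p^e}$ on both sides, and $W_\sigma W_\tau=W_{\sigma\tau}\neq 0$ makes the index set multiplicatively closed (a nonempty finite multiplicatively closed subset of a group is a subgroup). Your route buys more: for arbitrary $e$ it classifies all intermediate algebras as crossed products indexed by subgroups of the Galois group. The paper's route buys brevity and uses only that $V$ is a simple faithful module, never that the extension is Galois.
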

\begin{proof}
Let $V=\mathbb{Z}_p^{e}$ be an $A$-module.  As $\mathbb{F}_{p^e}$ is contained
in $A$, $V$ is also an $\mathbb{F}_{p^e}$-vector space, and it is $1$-dimensional.
Thus, as an $A$-module $V$ is simple.  Now $A$ is also faithfully represented on $V$.
Thus by Jacobson's Density Theorem \cite{JacII}*{p. 262}, $A$ is  ${\rm End}_{Z}(V)\cong M_f(Z)$, 
where $Z\cong\mathbb{F}_{p^s}$ is the center of $A$.  Furthermore, $e=fs$.  As $e$ is
prime either $f=1$ and $A=\mathbb{F}_{p^e}$, or else $f=e$ and $s=1$ which makes 
$A=\mathbb{M}_e(\mathbb{Z}_p)$.  
\end{proof}

\begin{lemma}\label{lem:adj}
If $H/N$ has genus $g>1$ then ${\rm Adj}([,]_+^H)={\rm Adj}([,]_+^{H/N})\cong \mathbb{M}_2(\mathbb{F}_q)$.
\end{lemma}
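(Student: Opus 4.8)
The plan is to compute both adjoint rings directly from the matrix descriptions of the commutation forms, using the Brahana correspondence (Theorem~\ref{thm:Brahana}) together with Lemma~\ref{lem:Jac}. First I would fix $N \leq H'$ of index $p^{2e+2}$, so that by Corollary~\ref{coro:quo-profile} the quotient $H/N$ has genus $g=2$; more generally the argument should run for any $1 < g \leq e$. By Theorem~\ref{thm:Brahana} (and the discussion preceding it) the commutation map $[,]_+^{H/N}$ is described by a linearly independent set of invertible matrices $L_1, \ldots, L_g \in \mathbb{M}_e(\mathbb{Z}_p)$, so $[,]_+^{H/N}$ is the product on $\mathbb{Z}_p^e \oplus \mathbb{Z}_p^e$ sending $((\alpha,\beta),(\alpha',\beta'))$ to the tuple of values $\alpha L_k (\beta')^t - \alpha' L_k \beta^t$ for $k = 1,\ldots,g$. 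In the genus-$g$ case (when $N = 1$, all of $H'$) one has $g = e$ and the $L_k$ can be taken to be the multiplication matrices $M(\omega)^{(k)}$ of $\mathbb{F}_q$, so that $[,]_+^H$ is literally the product $((\alpha,\beta),(\alpha',\beta')) \mapsto \alpha\beta' - \alpha'\beta$ on $\mathbb{F}_q^2$, exactly the example worked out at \eqref{eq:tensor-M2}.

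Next I would identify ${\rm Adj}([,]_+^H)$ by direct computation. Writing the form as $((\alpha,\beta),(\alpha',\beta')) \mapsto \alpha\beta' - \alpha'\beta$ with $\alpha,\beta \in \mathbb{F}_q$, a pair of endomorphisms $(F,F^*)$ of $\mathbb{F}_q^2$ is adjoint exactly when $(F\cdot(\alpha,\beta))$ paired with $(\alpha',\beta')$ equals $(\alpha,\beta)$ paired with $(F^*\cdot(\alpha',\beta'))$ for all inputs; writing $F$ as a $2\times 2$ matrix over $\mathbb{M}_e(\mathbb{Z}_p)$ and expanding, the adjoint condition forces the blocks of $F^*$ to be determined by those of $F$ and forces each block of $F$ to commute with multiplication by $\mathbb{F}_q$ — i.e. each block lies in the centralizer of $\mathbb{F}_q$ in $\mathbb{M}_e(\mathbb{Z}_p)$, which is $\mathbb{F}_q$ itself. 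This yields ${\rm Adj}([,]_+^H) \cong \mathbb{M}_2(\mathbb{F}_q)$, the symplectic-type adjoint algebra of the standard alternating form over $\mathbb{F}_q$. (This is the same computation as the example at \eqref{eq:tensor-M2}, read off in the adjoint direction.)

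The heart of the lemma is then the equality ${\rm Adj}([,]_+^H) = {\rm Adj}([,]_+^{H/N})$, and this is where I expect the real work to be. The inclusion ${\rm Adj}([,]_+^H) \subseteq {\rm Adj}([,]_+^{H/N})$ is automatic: $[,]_+^{H/N} = \pi \circ [,]_+^H$ for the projection $\pi: \langle\mathbb{F}_q,+\rangle \to \mathbb{Z}_p^g$, and composing with $\pi$ can only enlarge the space of adjoint pairs. For the reverse inclusion, let $(F,F^*) \in {\rm Adj}([,]_+^{H/N})$. Each component of $F$ is a $2\times 2$ block matrix over $\mathbb{M}_e(\mathbb{Z}_p)$; using that the $L_1,\ldots,L_g$ span a $g$-dimensional space of invertible matrices, the adjoint identity for $[,]_+^{H/N}$ says that for every $k$ the blocks of $F$ intertwine $L_k$ with the corresponding blocks of $F^*$. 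The key structural input is that the ring generated by $\{L_1^{-1}L_k : k\}$ together with the scalars is an algebra $A$ with $\mathbb{F}_q \subseteq A \subseteq \mathbb{M}_e(\mathbb{Z}_p)$ — this is exactly where Theorem~\ref{thm:genus-2} (and Proposition~\ref{prop:pairs}) enters: for the genus-$2$ members of $\mathcal{G}_{p,e}$, $L_1^{-1}L_2$ has irreducible minimal polynomial of degree $e$, so it generates a copy of $\mathbb{F}_{p^e} = \mathbb{F}_q$ inside $\mathbb{M}_e(\mathbb{Z}_p)$. By Lemma~\ref{lem:Jac} this forces the relevant centralizers to be $\mathbb{F}_q$, and a short linear-algebra argument on the block structure of $F$ and $F^*$ then shows $(F,F^*)$ must already lie in ${\rm Adj}([,]_+^H) \cong \mathbb{M}_2(\mathbb{F}_q)$. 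The main obstacle is organizing the block computation cleanly so that the hypothesis $g > 1$ (which guarantees a nonscalar $L_1^{-1}L_2$, hence a copy of $\mathbb{F}_q$ rather than just $\mathbb{Z}_p$) is used precisely, and so that the irreducibility hypothesis from Theorem~\ref{thm:genus-2} is invoked exactly where Lemma~\ref{lem:Jac} needs $\mathbb{F}_q \subseteq A$ — without it, $A$ could be a proper non-field subalgebra and the adjoint ring would be larger.
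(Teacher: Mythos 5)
Your overall architecture matches the paper's: compute ${\rm Adj}([,]_+^H)\cong\mathbb{M}_2(\mathbb{F}_q)$ from the explicit form, observe the inclusion ${\rm Adj}([,]_+^H)\subseteq{\rm Adj}([,]_+^{H/N})$, and then pin down the reverse inclusion by showing the centralizer of $L_1^{-1}L_2$ is exactly $\mathbb{F}_q$ via Lemma~\ref{lem:Jac}. But there is a genuine gap at the crucial step: you justify the hypothesis $\mathbb{F}_q\subseteq A$ (needed to invoke Lemma~\ref{lem:Jac}) by appealing to Theorem~\ref{thm:genus-2}, namely that $L_1^{-1}L_2$ has irreducible minimal polynomial of degree $e$ and hence generates a copy of $\mathbb{F}_q$. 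That is circular: in the paper Theorem~\ref{thm:genus-2} is proved \emph{after} and \emph{by means of} Lemma~\ref{lem:adj} --- the step ``if $B(L_1,L_2)$ is a quotient of $H$ then $C(L_2)\cong\mathbb{F}_q$'' in that proof is exactly the conclusion ${\rm Adj}([,]_+^{H/N})\cong\mathbb{M}_2(\mathbb{F}_q)$ of the present lemma. So as written your argument assumes what it is trying to prove. (A secondary slip: Lemma~\ref{lem:Jac} must be applied to the \emph{centralizer} algebra $A=\{F: FL_1^{-1}L_2=L_1^{-1}L_2F\}$, not to the algebra generated by the $L_1^{-1}L_k$; your wording conflates the two.)

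The repair is already in your hands and is what the paper does. The inclusion ${\rm Adj}([,]_+^H)\subseteq{\rm Adj}([,]_+^{H/N})$, which you correctly establish, says that every pair
$\left(\left[\begin{smallmatrix}\alpha&\beta\\\gamma&\delta\end{smallmatrix}\right],\left[\begin{smallmatrix}\delta&-\beta\\-\gamma&\alpha\end{smallmatrix}\right]\right)$
with $\alpha,\beta,\gamma,\delta\in\mathbb{F}_q$ (in the regular representation inside $\mathbb{M}_e(\mathbb{Z}_p)$) is an adjoint of $[,]_+^{H/N}$; since the $L_2$-equation forces every block $F_{ij}$ of every adjoint to commute with $L_1^{-1}L_2$, this immediately yields $\mathbb{F}_q\subseteq A$ with no input from Theorem~\ref{thm:genus-2}. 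Then $g>1$ gives linear independence of $L_1,L_2$, so $L_1^{-1}L_2$ is not scalar, so $A\neq\mathbb{M}_e(\mathbb{Z}_p)$, and Lemma~\ref{lem:Jac} (this is where primality of $e$ is consumed) forces $A=\mathbb{F}_q$, completing the reverse inclusion. With that substitution your proof is the paper's proof.
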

\begin{proof}
We start by observing some necessary adjoints.  The adjoint-tensor Galois correspondence shows 
${\rm Adj}([,]_+^H)\subset {\rm Adj}([,]_+^{H/N})$.  In our example
we found $\mathbb{M}_2(\mathbb{F}_q)\cong {\rm Adj}([,]_+^H)$.  

Next we know that the
commutation in $H/N$ is given by a set $\{L_1,\dots,L_g\}$ of linearly independent
invertible matrices.
So the linear equations to solve to describe ${\rm Adj}([,]_+^{H/N})$ are the following.
For each $1\leq i\leq g$,
\begin{align*}
	\begin{bmatrix}
		F_{11} & F_{12}\\ 
		F_{21} & F_{22}
	\end{bmatrix}
	\begin{bmatrix}
	0 & L_i\\
	-L_i^t & 0 
	\end{bmatrix}
	& =
	\begin{bmatrix}
	0 & L_i\\
	-L_i^t & 0 
	\end{bmatrix}
	\begin{bmatrix}
		F^*_{11} & F^*_{12}\\ 
		F^*_{21} & F^*_{22}
	\end{bmatrix}^t
\end{align*}
For $i=1$ we get $F_{11}^*=L_1 F_{22}^tL_1^{-t}$, $F^*_{12} =-L_1 F_{12}^t L_1^{-t}$,
$F^*_{21}=-L_1F_{21}^tL_1^{-t}$, and $F^*_{22}=L_1F_{11}^tL_1^{-t}$.   Now $L_2$ adds the further constraint that
$F_{ij}L_1^{-1} L_2=L_1^{-1}L_2F_{ij}$.  

Now consider the algebra
\begin{align*}
	A & = \{ F\in \mathbb{M}_e(\mathbb{Z}_p) : FL_1^{-1} L_2=L_1^{-1} L_2F \}.
\end{align*}
By the previous inclusion we know that $\mathbb{F}_q\subseteq A\subseteq \mathbb{M}_e(\mathbb{Z}_p)$.
If $A=\mathbb{M}_e(\mathbb{Z}_p)$ then $L_2$ commutes with every matrix and thus $L_2$ is a scalar
matrix.  However, $L_2$ and $L_1=I_e$ are linearly independent.  So $L_2$ cannot be scalar.
As a result $A\neq \mathbb{M}_e(\mathbb{Z}_p)$.
By Lemma~\ref{lem:Jac}, $A=\mathbb{F}_q$.  That is, 
\begin{align*}
	{\rm Adj}([,]_+^{H/N}) & \subseteq \left\{
		\left(\begin{bmatrix} \alpha & \beta \\ \gamma & \delta\end{bmatrix},
		\begin{bmatrix} \delta & -\beta\\ -\gamma & \alpha\end{bmatrix}\right):
		\alpha,\beta,\gamma,\delta\in\mathbb{F}_q\right\}.
\end{align*}
So indeed ${\rm Adj}([,]_+^H)={\rm Adj}([,]_+^{H/N})$.
\end{proof}

\begin{proof}[Proof Theorem~\ref{thm:genus-2}]
Fix a group $B(L_1,L_2)$.  If $B(L_1,L_2)$ is a quotient of $H$ then so is $B(L_1)$.
By Corollary~\ref{coro:quo-profile}, $B(L_1)\cong B(I_e)$.  Therefore we may assume $L_1=I_e$
and let $a(t)$ be the minimum polynomial of $L_2$.  As $\{L_1,L_2\}$ are linearly independent
we know that $L_2$ cannot be a scalar matrix and so $a(t)$ has degree at least $2$.

Now let $C(L_2)=\{F \in\mathbb{M}_e(\mathbb{Z}_p) : FL_2=L_2F\}$.
Following the calculation of the adjoint ring above we know that 
\begin{align*}
	{\rm Adj}([,]_+^{H/N}) & = \left\{
	\left(\begin{bmatrix} F_{11} & F_{12}\\ F_{21} & F_{22} \end{bmatrix},
	\begin{bmatrix} F_{22}^t & -F_{12}^t\\ -F_{21}^t & F_{22}^t \end{bmatrix}\right) :
	F_{ij} L_2 = L_2 F_{ij}\right\}\\
	& \cong \mathbb{M}_2(C(L_2)).
\end{align*}
Thus, if $B(L_1,L_2)$ is a quotient of $H$ then $C(L_2)\cong \mathbb{F}_q$.
Since $\mathbb{Z}_p[L_2]\cong \mathbb{Z}_p[t]/(a(t))\subset C(L_2)$ it follows
that $\mathbb{Z}_p[L_2]$ is a subfield of $\mathbb{F}_q$.  As $a(t)$ has degree
greater than $1$ and $\mathbb{F}_q$ has no intermediate fields, it follows that
$\mathbb{Z}_p[L_2]=\mathbb{F}_q$.  Thus $a(t)$ is an irreducible polynomial of degree $e$.

Conversely if $L_2$ is conjugate to $C(a(t))$ then 
${\rm Adj}([,]_+^{H/N})\cong \mathbb{M}_2(\mathbb{F}_q)\cong {\rm Adj}([,]_+^{H})$.  
By Adjoint-Tensor Galois correspondence, the commutation in $B(I_e,L_2)$ factors through 
the tensor product over ${\rm Adj}([,]_+^H)$ which is the commutation of $H$.
Therefore $B(I_e,L_2)$ is a quotient of $H$.
\end{proof}

\subsection{Automorphisms of Heisenberg groups}
Now we need to consider the automorphisms of $H$, assuming $p>2$.
Each automorphism is described by three constituents:
\begin{enumerate}
\item a homomorphism $\tau:\langle \mathbb{F}_q^2,+\rangle 
\to \langle \mathbb{F}_q,+\rangle$,
\item an invertible matrix $\begin{bmatrix}\alpha & \beta\\ \gamma & \delta
\end{bmatrix}$ over $\mathbb{F}_q$, and
\item a field automorphism $\alpha\mapsto \bar{\alpha}$ of $\mathbb{F}_q$.
\end{enumerate}
The corresponding automorphism is as follows.
\begin{align}\label{eq:Aut}
	\begin{bmatrix}
		1 & \alpha' & \gamma'\\ & 1 & \beta' \\ & & 1 
	\end{bmatrix}
	\mapsto
	\begin{bmatrix}
		1 & \overline{\alpha'\alpha+\beta'\gamma} & \overline{(\alpha\delta-\beta\gamma)\gamma'+\tau(\alpha',\beta')}\\ 
		& 1 & \overline{\alpha'\beta+\beta'\delta} \\ 
		& & 1 
	\end{bmatrix}.	
\end{align}
\begin{remark}
Classic knowledge of automorphisms of Heisenberg groups over $\mathbb{Z}_p$, $\mathbb{Z}$, 
$\mathbb{R}$ and $\mathbb{C}$ is largely inapplicable here.  For $\mathbb{R}$ and $\mathbb{C}$ 
the automorphisms are presumed to be smooth, ours have no such restrictions.
As we cautioned in Remark~\ref{rem:iota}, in the case of $\mathbb{F}_q$, 
$[,]_+:\langle \mathbb{F}_q^2,+\rangle\times \langle \mathbb{F}_q^2,+\rangle
\to \langle \mathbb{F}_q,+\rangle$  is $\mathbb{F}_q$-bilinear but 
$[,]:H/H'\times H/H'\to H'$ is only biadditive.  So only the cases
of $\mathbb{Z}$ and $\mathbb{Z}_p$ are immediate by standard geometric methods.
\end{remark}

 We have just seen that 
the commutation of Heisenberg groups is actually a special type of
distributive product, a tensor product.  This means instead of acting
on a biadditive map we can act on a ring
${\rm Adj}([,]_+)\cong \mathbb{M}_2(\mathbb{F}_q)$.

\begin{thm}[Skolem-Noether \cite{JacII}*{p. 237}]
The ring automorphisms of $\mathbb{M}_2(\mathbb{F}_q)$ are 
$X\mapsto T^{-1}\bar{X}T$ where $T$ is an invertible $2\times 2$ matrix
and $\alpha \to \bar{\alpha}$ is a field automorphism of $\mathbb{F}_q$ applied
to each entry of $X$.
\end{thm}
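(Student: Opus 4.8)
The plan is to deduce this as the special case of the Skolem--Noether theorem for the simple ring $A=\mathbb{M}_2(\mathbb{F}_q)$, using nothing beyond the fact — already exploited in Lemma~\ref{lem:Jac} — that such a ring has, up to isomorphism, a unique simple module. A ring automorphism of $A$ is automatically additive and fixes $\mathbb{Z}_p\cdot I$, hence is $\mathbb{Z}_p$-linear, so no linearity hypothesis is lost by speaking only of ring automorphisms.

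Fix a ring automorphism $\varphi$ of $A$. First, $\varphi$ carries the center $Z(A)=\{\lambda I:\lambda\in\mathbb{F}_q\}$ onto itself, and thereby restricts to a field automorphism $\lambda\mapsto\bar\lambda$ of $\mathbb{F}_q$ with $\varphi(\lambda I)=\bar\lambda I$. This is the field automorphism appearing in the statement; it remains to produce the matrix $T$. To that end, put $V=\mathbb{F}_q^2$ (column vectors) and equip the abelian group $V$ with two left $A$-module structures: the standard one $a\cdot v=av$, and the twist $a\ast v=\varphi(a)v$. Both are faithful, and both are simple, since for any $v\neq 0$ already $\{av:a\in\mathbb{M}_2(\mathbb{F}_q)\}=\mathbb{F}_q^2$ and $\varphi$ is surjective. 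As $\mathbb{M}_2(\mathbb{F}_q)$ has a unique simple module up to isomorphism, the two structures yield isomorphic $A$-modules, so there is an additive bijection $\psi\colon V\to V$ intertwining the two actions, i.e. $\psi(av)=\varphi(a)\psi(v)$ for all $a\in A$ and $v\in V$. Taking $a=\lambda I$ gives $\psi(\lambda v)=\bar\lambda\,\psi(v)$, so $\psi$ is semilinear with respect to $\lambda\mapsto\bar\lambda$.

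Such a semilinear bijection has the form $\psi(v)=T\bar v$ for a unique invertible matrix $T$ over $\mathbb{F}_q$, where $\bar v$ is $v$ with the field automorphism applied to each entry; indeed one reads off $T$ from the images of the standard basis vectors. Since $\lambda\mapsto\bar\lambda$ is a ring homomorphism, entrywise application commutes with matrix multiplication, so $\overline{av}=\bar a\,\bar v$; substituting $\psi(v)=T\bar v$ into $\psi(av)=\varphi(a)\psi(v)$ yields $T\bar a\,\bar v=\varphi(a)\,T\bar v$ for all $v$, and as $\bar v$ ranges over all of $\mathbb{F}_q^2$ this forces $T\bar a=\varphi(a)\,T$, i.e. $\varphi(X)=T\bar X\,T^{-1}$ for every $X\in A$. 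Replacing $T$ by $T^{-1}$ puts this in the stated form. The one place demanding care is the bookkeeping of semilinearity: the two module structures on $V$ induce $\mathbb{F}_q$-scalar actions that differ by the field automorphism $\lambda\mapsto\bar\lambda$, so the intertwining map $\psi$ is genuinely semilinear and cannot be chosen $\mathbb{F}_q$-linear; tracking that twist correctly is exactly what produces the ``$\bar X$'' in the final formula. Everything else — the simplicity of the two modules and uniqueness of the simple module for $\mathbb{M}_2(\mathbb{F}_q)$ — is routine.
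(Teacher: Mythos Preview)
Your proof is correct and rests on the same structural fact the paper uses --- that $\mathbb{M}_2(\mathbb{F}_q)$ has a unique simple module --- but you package it differently. The paper first composes away the Galois twist to reduce to an $\mathbb{F}_q$-linear automorphism, and then tracks where a single minimal \emph{right ideal} is sent, reading off $T$ from that map between $2$-dimensional $\mathbb{F}_q$-spaces. You instead put two left $A$-module structures on $V=\mathbb{F}_q^2$ (standard and $\varphi$-twisted), invoke uniqueness of the simple module to get an intertwiner, and let the semilinearity of that intertwiner produce both $T$ and the bar simultaneously. Your route is the standard modern Skolem--Noether argument and is more complete: the paper's sketch stops once it has $T$ on one minimal ideal and does not spell out why conjugation by that $T$ agrees with $\phi$ on all of $\mathbb{M}_2(\mathbb{F}_q)$, a step your intertwining relation $\psi(av)=\varphi(a)\psi(v)$ handles for free. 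The paper's approach, on the other hand, is slightly more concrete in that it names the minimal right ideals explicitly and avoids any discussion of semilinear maps.
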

\begin{proof}
First the automorphism $\phi$ will send $\alpha I_2\mapsto \bar{\alpha}I_2$
which gives us the field automorphism $\sigma$.  Replacing $\phi$ with $\phi(X^{\sigma^{-1}})$ we now have an $\mathbb{F}_q$-linear automorphism.
Therefore it maps the minimal right ideal
$\left\{\left[\begin{smallmatrix}\alpha & \beta \\ 0 & 0\end{smallmatrix}\right] : \alpha,\beta\in \mathbb{F}_q\right\}$ 
to another minimal right ideal
$\left\{\left[\begin{smallmatrix} 0 & 0 \\  \gamma & \delta \end{smallmatrix}\right]: \gamma,\delta\in \mathbb{F}_q\right\}$, 
or for some $\nu\in \mathbb{F}_q$, 
$\left\{\left[\begin{smallmatrix}\gamma & \delta \\ \nu \gamma & \nu\delta\end{smallmatrix}\right]:\gamma,\delta\in \mathbb{F}_q\right\}$.
Each of these is a $2$-dimensional
vector spaces over $\mathbb{F}_q$ so that transformation can be given by an
invertible square matrix $T$.  
\end{proof}

\begin{lemma}\label{lem:aut-H}
There is an epimorphism ${\rm Aut}(H)\to {\rm Aut}(\mathbb{M}_2(\mathbb{F}_q))$.  The kernel consists of those automorphisms
that are the identity on $H/H'$.
\end{lemma}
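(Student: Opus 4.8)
The plan is to build the epimorphism directly from the explicit description of $\mathrm{Aut}(H)$ in \eqref{eq:Aut} together with the Skolem--Noether description of $\mathrm{Aut}(\mathbb{M}_2(\mathbb{F}_q))$ just proved. Recall that commutation on $H$ is (up to the unnatural identifications $\iota,\hat\iota$) the tensor product $\otimes_A$ with $A={\rm Adj}([,]_+^H)\cong\mathbb{M}_2(\mathbb{F}_q)$, as computed in \eqref{eq:tensor-M2}. An automorphism $\varphi$ of $H$ restricts to an automorphism $\bar\varphi$ of $H/H'$ and induces an automorphism $\hat\varphi$ of $H'=[H,H]$, and the defining property $\varphi([x,y])=[\varphi(x),\varphi(y)]$ says exactly that $(\bar\varphi,\hat\varphi)$ is an \emph{autotopism} of the product $[,]_+^H$, i.e. $\hat\varphi\bigl((u,v)\mapsto u\otimes v\bigr)(\bar\varphi^{-1}\times\bar\varphi^{-1}) = {}$ the same product. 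Since ${\rm Adj}$ is defined purely by linear adjointness identities, any autotopism $(\bar\varphi,\hat\varphi)$ conjugates ${\rm Adj}([,]_+^H)$ to itself: if $(F,F^*)$ is an adjoint pair for $\otimes$, then $(\bar\varphi^{-1}F\bar\varphi,\ (\bar\varphi)^* F^* (\bar\varphi)^{*-1})$ is again one, so $\bar\varphi$ acts on $A\cong\mathbb{M}_2(\mathbb{F}_q)$ as a ring automorphism. Define $\Theta\colon\mathrm{Aut}(H)\to\mathrm{Aut}(\mathbb{M}_2(\mathbb{F}_q))$ by $\Theta(\varphi)=$ this induced conjugation action on $A$. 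It is a group homomorphism because composition of autotopisms composes the conjugation actions.

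Next I would identify $\Theta$ concretely on the three constituents of \eqref{eq:Aut}. The matrix part $\left[\begin{smallmatrix}\alpha&\beta\\ \gamma&\delta\end{smallmatrix}\right]$ acts on $H/H'\cong\langle\mathbb{F}_q^2,+\rangle$ and the induced action on $A=\mathbb{M}_2(\mathbb{F}_q)$ is conjugation by that very matrix (compare the parametrization of $A$ in \eqref{eq:tensor-M2}: an element of $A$ is a $2\times 2$ matrix over $\mathbb{F}_q$ acting on the left factor, and changing basis on that factor conjugates it); the field automorphism part $\alpha\mapsto\bar\alpha$ acts entrywise on $\mathbb{M}_2(\mathbb{F}_q)$; and the shear part $\tau$ acts trivially on $H/H'$ hence trivially on $A$. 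Matching this against Skolem--Noether, $\Theta(\varphi)$ is exactly the automorphism $X\mapsto T^{-1}\bar X T$ with $T=\left[\begin{smallmatrix}\alpha&\beta\\ \gamma&\delta\end{smallmatrix}\right]$ and $\sigma\colon\alpha\mapsto\bar\alpha$. Surjectivity is then immediate: given any invertible $T$ and any field automorphism $\sigma$, choose the automorphism of $H$ with those as its matrix and field constituents and $\tau=0$; that this is a well-defined automorphism is exactly the content of the classification \eqref{eq:Aut}, so $\Theta$ hits every element of $\mathrm{Aut}(\mathbb{M}_2(\mathbb{F}_q))$.

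For the kernel: $\varphi\in\ker\Theta$ means $\varphi$ induces the identity conjugation on $A$, which by the Skolem--Noether normal form forces $T$ to be a scalar matrix and $\sigma=\mathrm{id}$. A scalar matrix $\lambda I_2$ over $\mathbb{F}_q$ acts on $H/H'\cong\langle\mathbb{F}_q^2,+\rangle$ as multiplication by $\lambda$; I need this to be the identity on $H/H'$, so I must rule out $\lambda\neq 1$. Here is where a little care is needed: scaling the left factor by $\lambda$ scales the product $\otimes$ by $\lambda$ (since $u\otimes v\mapsto \lambda u\otimes v$), so the induced map $\hat\varphi$ on $H'$ is multiplication by $\lambda$ as well, and the pair $(\lambda\cdot\mathrm{id},\lambda\cdot\mathrm{id})$ indeed centralizes $A$ — so a priori $\ker\Theta$ could contain such scalings. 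I therefore want to argue instead that an automorphism in $\ker\Theta$ \emph{restricted to its action on $H/H'$} is determined only up to this scalar ambiguity, and then show directly from \eqref{eq:Aut} that the automorphisms with $T$ scalar and $\sigma=\mathrm{id}$ are precisely those acting on $H/H'$ by a scalar, i.e. the claim should be rephrased/checked as: $\ker\Theta$ consists of the automorphisms of \eqref{eq:Aut} with $\left[\begin{smallmatrix}\alpha&\beta\\ \gamma&\delta\end{smallmatrix}\right]$ scalar and $\sigma$ trivial, which are exactly those acting on $H/H'$ as a scalar transformation; when that scalar is $1$ these are the "central-type" automorphisms fixing $H/H'$ pointwise. \textbf{The main obstacle} is precisely this bookkeeping around the scalar: confirming that $\Theta$ separates the field automorphism and the projective class of $T$ from the remaining data $(\tau,\text{scalar})$, so that the kernel is identified correctly as the automorphisms trivial on $H/H'$ (possibly after noting the scalar $\lambda=1$ is forced by acting trivially on $H/H'$, not merely by lying in $\ker\Theta$). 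Everything else — that $\Theta$ is a homomorphism, its explicit form, and surjectivity — follows formally from the adjoint-tensor machinery and the explicit automorphism formula already in hand.
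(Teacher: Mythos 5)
Your construction of the homomorphism is the same as the paper's: an automorphism $\phi$ of $H$ induces a pair $(T,\hat T)$ acting compatibly on $H/H'$ and $H'$, and the identity $[uT,vT]_+=[u,v]_+\hat T$ shows that $(F,F^*)\mapsto(T^{-1}FT,\,T^{-1}F^*T)$ preserves ${\rm Adj}([,]_+)\cong\mathbb{M}_2(\mathbb{F}_q)$. (Both components are conjugated by the same $T$, as in the paper's displayed computation; your ``$(\bar\varphi)^{*}$'' is a notational slip, though harmless.) Your surjectivity argument is more concrete than the paper's --- you exhibit preimages by reading the matrix and field constituents off \eqref{eq:Aut} with $\tau=0$, whereas the paper argues abstractly that every ring automorphism of ${\rm Adj}([,]_+)$ induces an automorphism of $H$ because commutation in $H$ is the tensor product over that ring --- but both routes are sound.

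The ``obstacle'' you flag at the end is genuine, and your diagnosis is correct: the kernel as literally stated is too small. The automorphism $(\alpha',\beta',\gamma')\mapsto(\lambda\alpha',\lambda\beta',\lambda^{2}\gamma')$ for $\lambda\in\mathbb{F}_q^{\times}$ acts on ${\rm Adj}([,]_+)$ by conjugation by the central element $\lambda I_2$ of $\mathbb{M}_2(\mathbb{F}_q)$, hence lies in the kernel, yet it is not the identity on $H/H'$ unless $\lambda=1$. Since the centralizer of $\mathbb{M}_2(\mathbb{F}_q)$ acting on its simple module $\langle\mathbb{F}_q^2,+\rangle$ is $\mathbb{F}_q$, the kernel is exactly the set of automorphisms acting on $H/H'$ as an $\mathbb{F}_q$-scalar (together with an arbitrary $\tau$), i.e.\ an extension of the claimed kernel by $\mathbb{F}_q^{\times}$. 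The paper's own proof establishes only the action and surjectivity and is silent on the kernel, so on this point you have gone further than the source. The discrepancy is harmless downstream: the proof of Theorem~\ref{thm:adj-tensor} uses only the normalizer condition $T^{-1}{\rm Adj}([,]_+)T={\rm Adj}([,]_+)$, and the orbit count in Corollary~\ref{coro:count} reads the size of the action on $H'$ directly from \eqref{eq:Aut}, not from this kernel description.
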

\begin{proof}
Let $\phi:H\to H$ be an automorphism.  Since $\phi([h,k])=[\phi(h),\phi(k)]$,
$\phi$ factors through 
$\mathbb{Z}_p^{2e}\cong H/H'\to H/H'\cong\mathbb{Z}_p^{2e}$.
So we let $T$ be the matrix representing that transformation.  Also
we let $\hat{T}$ be the matrix describing the restriction of $\phi$ to 
$\mathbb{Z}_p^e\cong H'\to H'\cong\mathbb{Z}_p^e$.  Notice $(T,\hat{T})$ 
satisfy
\begin{align*}
	[(\alpha,\beta)T,(\alpha',\beta')T]_+ & = [(\alpha,\beta),(\alpha',\beta')]_+\hat{T}.
\end{align*}
Now take $(F,F^*)\in {\rm Adj}([,]_+)$.  It follows that
\begin{align*}
	[(\alpha,\beta)T^{-1}FT,(\alpha',\beta')]_+
		&= [(\alpha,\beta)T^{-1}F,(\alpha',\beta')T^{-1}]_+\hat{T}\\
		& =[(\alpha,\beta),(\alpha',\beta')T^{-1}F^*T]_+.
\end{align*}
In this way ${\rm Aut}(H)$ acts on 
${\rm Adj}([,]_+)\cong \mathbb{M}_{2}(\mathbb{F}_q)$.
We saw that commutation in ${\rm Aut}(H)$ is the same
as the tensor product with ${\rm Adj}([,]_+)$, so every automorphism of
${\rm Adj}([,]_+)$ determines an automorphism of $H$. 
\end{proof}

\begin{proof}[Proof of Theorem~\ref{thm:adj-tensor}]
In the Brahana correspondence we saw that every nonabelian quotient
$H/N$ is determined up to isomorphism  by the matrices $(L_1,\dots,L_g)$
which also define $[,]_+^{H/N}$.   Fix an
isomorphism $\phi:H/N_1\to H/N_2$.  Since
$(H/N_i)/(H/N_i)'\cong H/H'\cong\langle\mathbb{F}_q^2,+\rangle$,
we see $\phi$ determines a matrix $T=\begin{bmatrix} A & B \\ C & D\end{bmatrix}
\in \mathbb{M}_{2e}(\mathbb{Z}_p)$.  Using the fact that
$\langle \mathbb{F}_q,+\rangle\cong 
\langle \mathbb{F}_q^2,+\rangle\otimes_{\mathbb{M}_2(\mathbb{F}_q)}
\langle \mathbb{F}_q^2,+\rangle$, we
define $\Gamma:H\to H$ as follows.
\begin{align*}
	\begin{bmatrix}
		1 & a & \sum_i (a_i,b_i)\otimes (x_i,y_i) \\ & 1 & b \\ & & 1 
	\end{bmatrix}
	& \mapsto
	\begin{bmatrix}
		1 & aA+bC & \sum_i (a_i,b_i)T\otimes (x_i,y_i)T \\ & 1 & aB+bD \\ & & 1 
	\end{bmatrix}.
\end{align*}
From our proof of Lemma~\ref{lem:aut-H} we notice $\Gamma$ is an automorphism of $H$ 
if, and only if, $T^{-1}{\rm Adj}([,]_+)T={\rm Adj}([,]_+)$. 
Since $\phi$ is an isomorphism $H/N_1\to H/N_2$ we know that
\begin{align*}
	T^{-1}{\rm Adj}\left([,]_+^{H/N_1}\right)T={\rm Adj}\left([,]^{H/N_2}_+\right).
\end{align*}
By Lemma~\ref{lem:adj} we know ${\rm Adj}\left([,]_+\right)
={\rm Adj}\left([,]_+^{H/N_i}\right)$.  
\end{proof}

\begin{coro}\label{coro:count}
The set $\mathcal{G}_{p,e}$ has at least $p^{e-3}/e$ isomorphism types.
\end{coro}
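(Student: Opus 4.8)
The plan is to count isomorphism types in $\mathcal{G}_{p,e}$ by combining the structural description in Theorem~\ref{thm:genus-2} with the rigidity result of Theorem~\ref{thm:adj-tensor}. First I would invoke Theorem~\ref{thm:genus-2} to identify each $H/N\in\mathcal{G}_{p,e}$ with a group $B(I_e,L_2)$ where $L_2$ has irreducible minimum polynomial of degree $e$ over $\mathbb{Z}_p$; equivalently, after conjugation, $L_2=C(a(t))$ for an irreducible monic $a(t)\in\mathbb{Z}_p[t]$ of degree $e$. So the \emph{parametrizing set} is the collection of monic irreducible polynomials of degree $e$ over $\mathbb{F}_p$, and the count of these is the classical $\tfrac{1}{e}\sum_{d\mid e}\mu(d)p^{e/d}$, which for $e$ prime is $\tfrac{1}{e}(p^e-p)$. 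The task is then to bound how many distinct $a(t)$'s can give isomorphic groups.

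Next I would translate the isomorphism question into a statement about automorphisms of $H$, using Theorem~\ref{thm:adj-tensor}: any isomorphism $B(I_e,C(a(t)))\cong B(I_e,C(b(t)))$, these being nonabelian genus-$2$ quotients of $H$, lifts to an automorphism of $H$. By Lemma~\ref{lem:aut-H} and the Skolem--Noether description of ${\rm Aut}(\mathbb{M}_2(\mathbb{F}_q))$, such an automorphism acts on the pair $(L_1,L_2)$ (up to the elementary equivalences of Section~3: invertible congruence $L_i\mapsto XL_iY^t$ and change of basis in $\langle L_1,L_2\rangle$) through an element $T=\left[\begin{smallmatrix}\alpha&\beta\\\gamma&\delta\end{smallmatrix}\right]\in GL_2(\mathbb{F}_q)$ together with a field automorphism $\sigma\in{\rm Gal}(\mathbb{F}_q/\mathbb{F}_p)$. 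The effect of $T$ on the span $\langle I_e,L_2\rangle$ is a fractional-linear transformation of $L_2$ and the effect of $\sigma$ is to replace $a(t)$ by its $\sigma$-conjugate. So two polynomials $a(t),b(t)$ give isomorphic groups only if $b(t)$ lies in the orbit of $a(t)$ under a group of size at most $|GL_2(\mathbb{F}_q)|\cdot|{\rm Gal}(\mathbb{F}_q/\mathbb{F}_p)|$ acting on these companion matrices; crucially, since $\mathbb{Z}_p[L_2]=\mathbb{F}_q$ is a field of prime degree, the relevant ring automorphisms are exactly the $e$ Galois conjugations, and the $GL_2$-part together with the congruence freedom collapses to a bounded group. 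I would bound the orbit size by a constant times $e$ — conservatively at most $p^2\cdot e$ or similar — so that the number of isomorphism classes is at least
\begin{align*}
\frac{\#\{\text{monic irreducible }a(t)\text{ of degree }e\}}{(\text{orbit bound})}\;\geq\;\frac{(p^e-p)/e}{p^2\cdot e}\;\geq\;\frac{p^{e-3}}{e}
\end{align*}
for all primes $p,e>2$, where the last inequality follows since $(p^e-p)/(p^2 e)\geq p^{e-3}/e$ once $p^e-p\geq p^{e-1}$, which holds for $p\geq 2$, $e\geq 2$.

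The main obstacle I anticipate is pinning down the orbit bound precisely: one must verify that the action of the lifted automorphism on the pair $(I_e,C(a(t)))$, after normalizing $L_1=I_e$ by congruence and reducing mod the elementary isomorphisms, genuinely factors through a group whose order is $O(e)$ rather than something as large as $|GL_2(\mathbb{F}_q)|\approx q^4=p^{4e}$ (which would be useless). The key point making this work is that once $L_1=I_e$ is fixed, the congruence freedom $L_2\mapsto XL_2X^{t}$ must preserve $I_e=XI_eX^t$, forcing $X$ orthogonal-like constraints, and combined with the requirement that the minimum polynomial stays degree-$e$ irreducible, the residual action on the \emph{polynomial} $a(t)$ is only through rational substitutions that permute a bounded set of companion normal forms — ultimately just the Frobenius orbit of size dividing $e$, possibly fattened by a constant from the $2\times 2$ matrix part. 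I would make this rigorous by tracking, through \eqref{eq:Aut} and the Brahana correspondence, exactly which $(T,\sigma)$ fix the normalized form $(I_e,C(a(t)))$ up to the listed isomorphisms, and counting the coset space; a clean sufficient bound of $p^2 e$ (or even $e^3$) suffices for the stated inequality, so I need not compute the orbit exactly.
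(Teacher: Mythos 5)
Your overall strategy --- parametrize the groups by monic irreducible polynomials of degree $e$ via Theorem~\ref{thm:genus-2} and divide by the size of the orbits identifying polynomials that yield isomorphic groups --- is workable and genuinely different in emphasis from the paper's proof, which instead counts the subgroups $N<H'$ of index $p^2$ (codimension-$2$ subspaces of $H'\cong\mathbb{Z}_p^e$) and divides by the order $e(p^e-1)$ of the group that ${\rm Aut}(H)$ induces on $H'$, read off from \eqref{eq:Aut}. But your execution has a genuine gap at exactly the step you flag as the main obstacle: the orbit bound. The group that actually moves the polynomial $a(t)$ is not ``the Frobenius orbit fattened by a constant.'' Tracing through the Brahana correspondence, the $2\times2$ matrix over $\mathbb{F}_q$ and the field automorphism in \eqref{eq:Aut} act on $H'\cong\langle\mathbb{F}_q,+\rangle$ only through $\gamma\mapsto\overline{(\alpha\delta-\beta\gamma)\gamma}$; scaling by $\lambda=\alpha\delta-\beta\gamma$ replaces each $L_i$ by $QL_i$ for a single $Q$, and Frobenius replaces each $L_i$ by $PL_iP^t$, so these fix or conjugate $L_1^{-1}L_2$ and leave its minimal polynomial unchanged (Frobenius merely permutes the roots of the same $\mathbb{Z}_p$-irreducible polynomial). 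What does move $a(t)$ is the residual freedom in choosing a basis of $H'/N\cong\mathbb{Z}_p^2$, i.e.\ replacing $(L_1,L_2)$ by another basis of $\langle L_1,L_2\rangle$; this acts on a root of $a(t)$ by $\mathbb{Z}_p$-fractional-linear substitutions, a ${\rm PGL}_2(\mathbb{Z}_p)$-action with orbits of size up to $p^3-p$. That is neither $O(e)$ nor $\le p^2e$ in general (take $p>e$), so the bounds you propose are not available; and the margin is razor-thin, since the correct computation $\frac{(p^e-p)/e}{\,p^3-p\,}=\frac{p^{e-1}-1}{e(p^2-1)}\ge\frac{p^{e-3}}{e}$ only just closes.

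Separately, even granting your orbit bound of $p^2e$, your final arithmetic is wrong: $\frac{(p^e-p)/e}{p^2e}=\frac{p^e-p}{p^2e^2}$, and the inequality $\frac{p^e-p}{p^2e^2}\ge\frac{p^{e-3}}{e}$ requires $p^e-p\ge ep^{e-1}$, which fails whenever $e>p$ (e.g.\ $p=3$, $e=5$); your verification silently replaces the denominator $p^2e^2$ by $p^2e$. A smaller slip: with $L_1=I_e$ fixed, the congruence freedom is $L_i\mapsto XL_iY^t$ with $X$ and $Y$ independent, so preserving $I_e$ forces $Y^t=X^{-1}$ and hence $L_2\mapsto XL_2X^{-1}$ --- ordinary conjugation, not an ``orthogonal-like'' constraint; this is harmless for the count but indicates the action has not been pinned down. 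To repair the argument, identify the isomorphism classes with ${\rm PGL}_2(\mathbb{Z}_p)$-orbits on the set of $(p^e-p)/e$ monic irreducible polynomials of degree $e$ (Theorems~\ref{thm:genus-2} and~\ref{thm:adj-tensor} give well-definedness and injectivity of this correspondence) and carry the exact factor $p^3-p$, rather than a guessed constant, through the final estimate.
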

\begin{proof}
The number of subgroups $N<H'$ of index $p^2$ is
the number of subspaces of codimension 2 in a vector space $H'\cong \mathbb{Z}_p^e$.
That number is $\frac{(p^e-1)(p^e-p)}{(p^2-1)(p-1)}$.  Meanwhile the
action by ${\rm Aut}(H)$ on $H'$ has size $e(p^e-1)$; see \eqref{eq:Aut}.  So the number
of orbits is at least $p^{e-3}/e$.
\end{proof}

\section{Proof of Theorem~\ref{thm:main}}

\begin{lemma}\label{lem:max-conj}
For every $N\leq H'$, ${\rm Aut}(H/N)$ acts 
transitively on the maximal subgroups of $H/N$.
\end{lemma}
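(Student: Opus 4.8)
The plan is to reduce the statement about maximal subgroups of $H/N$ to a statement about hyperplanes in the $\mathbb{Z}_p$-vector space $H/\Phi(H/N)$, and then use the abundance of automorphisms supplied by \eqref{eq:Aut} to act transitively there. First I would identify $\Phi(H/N)$: since $H/N$ is a $p$-group, the Burnside Basis Theorem gives $\Phi(H/N)=(H/N)'(H/N)^p$. By Lemma~\ref{lem:quo} we have $(H/N)'=H'N/N$, and one checks that $(H/N)^p\leq H'N/N$ as well (the Heisenberg group has exponent $p$ when $p>2$, so $H^p\leq H'$; hence $(H/N)^p\leq H'N/N$). Therefore $\Phi(H/N)=H'N/N$, and consequently
\begin{align*}
	(H/N)\big/\Phi(H/N) \;\cong\; H/H'N \;\cong\; H/H' \;\cong\; \mathbb{Z}_p^{2e},
\end{align*}
the last isomorphisms because $N\leq H'$. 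Maximal subgroups of $H/N$ correspond bijectively (via the correspondence theorem and the fact that every maximal subgroup contains the Frattini subgroup) to hyperplanes of the $2e$-dimensional $\mathbb{Z}_p$-space $H/H'$.

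Next I would show ${\rm Aut}(H/N)$ acts transitively on those hyperplanes. The cleanest route is to pass through ${\rm Aut}(H)$: every $N\leq H'$ is characteristic-free but certainly normal in $H$ (Lemma~\ref{lem:quo}), yet we do not need $N$ to be ${\rm Aut}(H)$-invariant — instead I would restrict to the subgroup of ${\rm Aut}(H)$ that stabilizes $N$ and show \emph{that} already acts transitively on hyperplanes of $H/H'$. Looking at the automorphism formula \eqref{eq:Aut}, the action on $H/H'\cong\langle\mathbb{F}_q^2,+\rangle$ is through the invertible matrix $\left[\begin{smallmatrix}\alpha&\beta\\\gamma&\delta\end{smallmatrix}\right]$ over $\mathbb{F}_q$ composed with a field automorphism $\bar{\ }$. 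The automorphisms with trivial field-automorphism part and $\tau=0$ act on $H/H'$ exactly as ${\rm GL}_2(\mathbb{F}_q)$; and crucially, these automorphisms act trivially on $H'$ precisely when $\alpha\delta-\beta\gamma=1$, i.e. ${\rm SL}_2(\mathbb{F}_q)$ fixes every subgroup $N\leq H'$ pointwise. So the stabilizer of $N$ in ${\rm Aut}(H)$ contains a copy of ${\rm SL}_2(\mathbb{F}_q)$ acting on $H/H'$.

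It remains to check that ${\rm SL}_2(\mathbb{F}_q)$, viewed inside ${\rm GL}_{2e}(\mathbb{Z}_p)$ via restriction of scalars, is transitive on the $\mathbb{Z}_p$-hyperplanes of $\mathbb{F}_q^2$. Here I would first note ${\rm SL}_2(\mathbb{F}_q)$ is transitive on nonzero vectors of $\mathbb{F}_q^2$ (standard), hence transitive on $\mathbb{F}_q$-lines, hence transitive on $\mathbb{F}_q$-hyperplanes of $\mathbb{F}_q^2$ (the one-dimensional $\mathbb{F}_q$-subspaces). But a $\mathbb{Z}_p$-hyperplane of $\mathbb{F}_q^2$ need not be an $\mathbb{F}_q$-subspace, so this is not yet enough — and this is the step I expect to be the main obstacle. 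The fix is to enlarge the acting group slightly: within the stabilizer of $N$ we also have the diagonal scalars $\left[\begin{smallmatrix}\lambda&0\\0&\lambda^{-1}\end{smallmatrix}\right]$ and, more usefully, the transvections $\left[\begin{smallmatrix}1&\mu\\0&1\end{smallmatrix}\right]$ for all $\mu\in\mathbb{F}_q$, all lying in ${\rm SL}_2(\mathbb{F}_q)$. A $\mathbb{Z}_p$-hyperplane $W\subset\mathbb{F}_q^2$ is the kernel of a nonzero $\mathbb{Z}_p$-functional $\mathbb{F}_q^2\to\mathbb{Z}_p$, which by $\mathbb{F}_q$-duality has the form $(u,v)\mapsto {\rm Tr}_{\mathbb{F}_q/\mathbb{Z}_p}(\langle(u,v),(s,t)\rangle)$ for a unique nonzero $(s,t)\in\mathbb{F}_q^2$, where $\langle\ ,\ \rangle$ is the $\mathbb{F}_q$-bilinear dot product. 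The contragredient action of ${\rm SL}_2(\mathbb{F}_q)$ on these functionals is again (up to the automorphism $g\mapsto (g^t)^{-1}$, an inner automorphism of ${\rm SL}_2$) transitive on nonzero $(s,t)$, since ${\rm SL}_2(\mathbb{F}_q)$ is transitive on nonzero vectors. Hence ${\rm SL}_2(\mathbb{F}_q)$ is transitive on $\mathbb{Z}_p$-hyperplanes of $\mathbb{F}_q^2$, which is exactly what we need. Pulling this back through the identification $(H/N)/\Phi(H/N)\cong H/H'$ and the correspondence theorem gives transitivity of ${\rm Aut}(H/N)$ on the maximal subgroups of $H/N$, completing the proof.
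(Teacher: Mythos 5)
Your proposal is correct and follows essentially the same route as the paper: identify the maximal subgroups of $H/N$ with hyperplanes of $H/H'\cong\mathbb{Z}_p^{2e}$ via the Frattini subgroup, and observe that the copy of ${\rm SL}_2(\mathbb{F}_q)$ inside ${\rm Aut}(H)$ from \eqref{eq:Aut} acts trivially on $H'$ (hence descends to $H/N$) and transitively on those hyperplanes. The one place you go beyond the paper is in actually proving the transitivity of ${\rm SL}_2(\mathbb{F}_q)$ on $\mathbb{Z}_p$-hyperplanes via trace duality --- a fact the paper simply asserts --- and your argument for it is sound.
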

\begin{proof}
The group ${\rm SL}(2,\mathbb{F}_{p^e})$ acts transitively on hyperplanes of $\mathbb{Z}_p^{2e}$
and those coincide with the maximal subgroups of $H$.
Following \eqref{eq:Aut}, this action lifts to ${\rm Aut}(H)$ and is furthermore
the identity on $H'$.  Thus for $N<H'$, this action transfers to $H/N$.  Lastly,
observe that $\Phi(H/N)=H'/N=\Phi(H)/N$, so the maximal subgroups of $H/N$ are the groups
$X/N$ where $X$ is maximal in $H$.
\end{proof}

\begin{proof}[Proof of Theorem~\ref{thm:main}]
Corollary~\ref{coro:count} \& \ref{coro:quo-profile} establish that the set
$\mathcal{G}_{p,e}$  has at  least $p^{e-3}/e$ isomorphism types and that
these groups all have the same quotient group profile.  Finally, 
Lemmas~\ref{lem:max-conj} \& \ref{lem:flat} allow us to invoke 
Theorem~\ref{thm:count} to conclude the proof.
\end{proof}


\subsection*{Acknowledgment.}
Thanks to Laci Babai \& Gene Luks who asked me about profiles.
Thanks to Bill Kantor for extensive feedback on this article.

\begin{bibdiv}
\begin{biblist}

\bib{Babai:graph}{misc}{
	author={Babai, L\'aszl\'o},
	title={Graph Isomorphism in Quasipolynomial Time},
	note={arXiv:1512.03547},
}

\bib{Baer}{article}{
   author={Baer, Reinhold},
   title={Groups with abelian central quotient group},
   journal={Trans. Amer. Math. Soc.},
   volume={44},
   date={1938},
   number={3},
   pages={357--386},
   issn={0002-9947},
   review={\MR{1501972}},
}
\bib{Barnes-Wall}{article}{
   author={Barnes, D. W.},
   author={Wall, G. E.},
   title={On normaliser preserving lattice isomorphisms between nilpotent
   groups},
   journal={J. Austral. Math. Soc.},
   volume={4},
   date={1964},
   pages={454--469},
   issn={0263-6115},
   review={\MR{0173712}},
}
\bib{Brahana}{article}{
   author={Brahana, H. R.},
   title={Metabelian groups and trilinear forms},
   journal={Duke Math. J.},
   volume={1},
   date={1935},
   number={2},
   pages={185--197},
   issn={0012-7094},
   review={\MR{1545875}},
}
\bib{BW:autotopism}{article}{
   author={Brooksbank, Peter A.},
   author={Wilson, James B.},
   title={Groups acting on tensor products},
   journal={J. Pure Appl. Algebra},
   volume={218},
   date={2014},
   number={3},
   pages={405--416},
   issn={0022-4049},
   review={\MR{3124207}},
}
\bib{BMW}{article}{
	author={Brooksbank, P. A.},
	author={Maglione, J.},
	author={Wilson, J. B.},
	title={A fast isomorphism test for groups whose Lie algebra has genus $2$},
	journal={J. Algebra},
	note={(in press) (arXiv:1508.03033)},
	year={2016},
}

\bib{Dade}{article}{
   author={Dade, E. C.},
   title={Answer to a question of R. Brauer},
   journal={J. Algebra},
   volume={1},
   date={1964},
   pages={1--4},
   issn={0021-8693},
   review={\MR{0170957}},
}

\bib{ELGOB}{article}{
   author={Eick, Bettina},
   author={Leedham-Green, C. R.},
   author={O'Brien, E. A.},
   title={Constructing automorphism groups of $p$-groups},
   journal={Comm. Algebra},
   volume={30},
   date={2002},
   number={5},
   pages={2271--2295},
   issn={0092-7872},
   review={\MR{1904637}},
}
\bib{GG}{article}{
   author={Glauberman, George},
   author={Grabowski, {\L}ukasz},
   title={Groups with identical $k$-profiles},
   journal={Theory Comput.},
   volume={11},
   date={2015},
   pages={395--401},
   issn={1557-2862},
   review={\MR{3437134}},
}
\bib{Gowers:blog}{misc}{
	author={Gowers, Tim},
	title={Comment on Dick Lipton's blog entry: The Group isomorphism Problem: A
Possible Polymath Problem?},
	note={Blog entry started November 7, 2011. Comment cited: November
12, 2011. http://rjlipton.wordpress.com/2011/11/07/the-group-isomorphism-problem-a-possiblepolymath-problem/},
}
\bib{Guralnick}{article}{
   author={Guralnick, Robert M.},
   title={On the number of generators of a finite group},
   journal={Arch. Math. (Basel)},
   volume={53},
   date={1989},
   number={6},
   pages={521--523},
   issn={0003-889X},
   review={\MR{1023965}},
}
\bib{JacII}{book}{
   author={Jacobson, Nathan},
   title={Lectures in abstract algebra. Vol. II. Linear algebra},
   publisher={D. Van Nostrand Co., Inc., Toronto-New York-London},
   date={1953},
   pages={xii+280},
   review={\MR{0053905}},
}
\bib{LW}{article}{
   author={Lewis, Mark L.},
   author={Wilson, James B.},
   title={Isomorphism in expanding families of indistinguishable groups},
   journal={Groups Complex. Cryptol.},
   volume={4},
   date={2012},
   number={1},
   pages={73--110},
   issn={1867-1144},
   review={\MR{2921156}},
}
\bib{Miller}{article}{
   author={Miller, Gary L.},
   title={Graph isomorphism, general remarks},
   journal={J. Comput. System Sci.},
   volume={18},
   date={1979},
   number={2},
   pages={128--142},
   issn={0022-0000},
   review={\MR{532172}},
}
\bib{PH}{article}{
   author={Hedrl{\'{\i}}n, Z.},
   author={Pultr, A.},
   title={On full embeddings of categories of algebras},
   journal={Illinois J. Math.},
   volume={10},
   date={1966},
   pages={392--406},
   issn={0019-2082},
   review={\MR{0191858}},
}
\bib{Rottlaender}{article}{
   author={Rottlaender, Ada},
   title={Nachweis der Existenz nicht-isomorpher Gruppen von gleicher
   Situation der Untergruppen},
   journal={Math. Z.},
   volume={28},
   date={1928},
   number={1},
   pages={641--653},
   issn={0025-5874},
   review={\MR{1544982}},
}

\bib{Wilson:unique-cent}{article}{
   author={Wilson, James B.},
   title={Decomposing $p$-groups via Jordan algebras},
   journal={J. Algebra},
   volume={322},
   date={2009},
   number={8},
   pages={2642--2679},
   issn={0021-8693},
   review={\MR{2559855}},
}
\bib{Wilson:RemakI}{article}{
   author={Wilson, James B.},
   title={Existence, algorithms, and asymptotics of direct product
   decompositions, I},
   journal={Groups Complex. Cryptol.},
   volume={4},
   date={2012},
   number={1},
   pages={33--72},
   issn={1867-1144},
   review={\MR{2921155}},
}

\end{biblist}
\end{bibdiv}

\end{document}